\newtheoremstyle{mytheoremstyle1} 
    {\topsep}                    
    {\topsep}                    
    {\itshape}                   
    {}                           
    {\bf \scshape}                   
    {}                          
    {.7em}                       
    {}  
\newtheoremstyle{mytheoremstyle2} 
    {\topsep}                    
    {\topsep}                    
    {}                   		
    {}                           
    {\scshape}                   
    {}                          
    {.7em}                       
    {}  
\theoremstyle{mytheoremstyle1}
\newtheorem{thm}{Theorem}[section]
\newtheorem{lem}[thm]{Lemma}
\newtheorem{prop}[thm]{Proposition}
\theoremstyle{mytheoremstyle2}
\newtheorem{hyp}[thm]{Hypothesis}
\theoremstyle{remark}
\newtheorem{rem}[thm]{Remark}
\def\Aa{\text{{\bf A}}}
\def\Bb{\text{{\bf B}}}
\def\C{\mathscr{C}}
\def\cC{\text{{\bf C}}}
\def\Cc{\overline{{\bf C}}}
\def\CC{\mathbb{C}}
\def\cchi{\underline{\chi}}
\def\H{\mathcal{H}}
\def\M{\mathbb{M}}
\def\F{\mathcal{F}}
\def\K{\mathcal{K}}
\def\N{\mathcal{N}}
\def\L{\mathcal{L}}
\def\Ll{{\bf L}}
\def\O{\mathcal{O}}
\def\Q{\mathcal{Q}}
\def\R{\mathbb{R}}
\def\Rr{{\bf R}}
\def\SS{\mathbb{S}}
\def\T{\mathcal{T}}
\def\TT{\tilde{\mathcal{T}}}
\def\U{\mathcal{U}}
\def\V{\mathcal{V}}
\def\ZZ{\mathbb{Z}}
\newcommand{\p}{\varphi}
\def\eeta{\tilde{\eta}}
\def\t{\tilde{t}}
\DeclareMathOperator{\Ker}{Ker}
\DeclareMathOperator{\ind}{ind}
\DeclareMathOperator{\Span}{span}
\DeclareMathOperator{\Id}{Id}
\DeclareMathOperator{\IIm}{Im}
\DeclareMathOperator{\D}{D}
\DeclareMathOperator{\sech}{sech}
\def\M{\mathcal{M}}
\def\Mm{\mathscr{M}}
\def \i{\text{i}}
\DeclareMathOperator{\uu}{u}
\newcommand\dif{\mathop{}\!\mathrm{d}}
\begin{document}

\title{Solitary waves in a Whitham equation \\
	with small surface tension}
\author{Mathew A. Johnson\thanks{Department of Mathematics, University of Kansas; \texttt{matjohn@ku.edu}}
\quad Tien Truong\thanks{Centre of Mathematical Sciences, Lund University; \texttt{tien.truong@math.lu.se}}
\quad Miles H. Wheeler\thanks{Department of Mathematical Sciences, University of Bath; \texttt{mw2319@bath.ac.uk}}
}
\maketitle
\normalsize

\abstract{Using a nonlocal version of the center manifold theorem and a normal form reduction, we prove the existence of small-amplitude generalized solitary-wave solutions and modulated solitary-wave solutions to the steady gravity-capillary Whitham equation with weak surface tension. 
Through the application of the center manifold theorem, the nonlocal equation for the solitary wave profiles 
is reduced to a four-dimensional system of ODEs inheriting reversibility.  Along particular parameter curves, relating directly to the classical gravity-capillary water wave problem, 
the associated linear operator is seen to undergo either a reversible $0^{2+}(\i k_0)$ bifurcation or a reversible $(\i s)^2$ bifurcation.  
Through a normal form transformation, the reduced system of ODEs along each relevant parameter curve is seen to be well approximated by a truncated system retaining only second-order or third-order
terms.  These truncated systems relate directly to systems obtained in the study of the full gravity-capillary water wave equation and, as such, the existence of generalized
and modulated solitary waves for the truncated systems is guaranteed by classical works, and they are readily seen to persist as solutions of the gravity-capillary Whitham equation
due to reversibility.  Consequently, this work illuminates further connections between the gravity-capillary Whitham equation and the full two-dimensional gravity-capillary 
water wave problem.
}


\section{Introduction}

%

In this paper, we consider the existence of small-amplitude solitary-wave solutions of the gravity-capillary Whitham equation
\begin{equation}\label{gra-cap-Whitham}
u_t + (\M_{g,d,T} u + u^2)_x = 0, 
\end{equation}
where here $\M_{g,d,T}$ is a Fourier-multiplier operator, acting on the spatial variable $x$, defined via its symbol
\[
m_{g,d, T}(\xi)= \left((g+ T \xi^2) \,  \frac{\tanh(d\xi)}{\xi}\right)^{1/2}.
\]
Here, $u(x,t)$ corresponds to the height of the fluid surface at position $x \in \R$ and time $t$, $g$ is the gravitational constant, $d$ is the undisturbed depth of the fluid, and $T\geq 0$ is the coefficient of surface tension. This symbol is precisely the phase speed for uni-directional waves in the full gravity-capillary water wave problem in \cite{RS_JohnsonBook,Whitham_book}.
In the absence of surface tension, that is when $T=0$, equation \eqref{gra-cap-Whitham} is referred to as {\it the gravity Whitham equation}, or
simply {\it the Whitham equation}, and was introduced 
by Whitham in \cite{Whitham_book,Whitham} as a full-dispersion generalization of the standard KdV equation.
In the case $T=0$, the bifurcation and dynamics of both periodic and solitary solutions of \eqref{gra-cap-Whitham} have been studied intensively over the last decade by many authors. It has been found that many high-frequency phenomena in water waves, such as breaking, peaking and the famous Benjamin-Feir instability, which do not manifest in the KdV or other shallow water,
are indeed manifested the Whitham equation. See for example \cite{BEP,EK,EJC,EGW,EW,HJ2,Hur} and references therein.

Given the success of the Whitham equation it is thus natural to consider the existence and dynamics of solutions when additional physical effects are incorporated.
In this work, we will concentrate on the existence of solitary-wave solutions\footnote{Note that the bifurcation and dynamics
of periodic waves has been previously studied in \cite{EJMR,HJ,JW}.} of \eqref{gra-cap-Whitham} with non-zero surface tension $T>0$.

It is straightforward to see that the properties of $m_{g,d,T}$ depend on the non-dimensional ratio
\begin{equation}\label{bond}
\tau = \frac{T}{gd^2},
\end{equation}
which is referred to as the Bond number.  In the full gravity-capillary water wave problem, it is known that the existence of solutions depends sensitively on whether
$\tau\in(0,1/3)$ or $\tau>1/3$, referred to as the weak- and strong-surface tension cases, respectively.  Indeed, in the case of strong surface tension the full gravity-capillary
water wave problem admits subcritical solitary waves of depression, i.e.~asymptotically constant traveling wave solutions with a unique critical point corresponding to an absolute minimum. See, for instance, \cite{AK2,AK}. Here, ``subcritical'' means the speed of the traveling wave is strictly less than the long-wave speed $m_{g,d,T}(0)=\sqrt{gd}$. If the traveling wave's speed is greater than $\sqrt{gd}$, it is said to be supercritical.  In the small surface tension case, however, considerably less is known about the existence of truly localized (e.g.~integrable) solitary waves.  It is known, however, that for small surface tension there exist generalized solitary waves, sometimes referred to as {\it solitary waves with ripples}. These correspond to bounded solutions of \eqref{steady-Whitham} which are (roughly) a superposition of a solitary wave and a co-propagating periodic wave with significantly smaller amplitude\footnote{We allow for the possibility of an asymptotic phase shift in the periodic wave between $x=-\infty$ and $x=+\infty$.}. See \cite{Sun,Baele1,SS,Lombardi1,Lombardi2}.  In particular, note that generalized solitary waves are not,
in fact, {\it solitary waves} in the traditional sense since they are not asymptotically constant at $x=\pm\infty$.  It is also known that there exist {\it modulated solitary waves}, which are bounded solutions of \eqref{steady-Whitham} with a solitary-wave envelope multiplying a complex exponential.  See, for instance,~\cite{IP, IK1990, BG}. Specifically, we note that~\cite{BG} proves the existence of geometrically distinct multipulse modulated solitary waves with exponential decay.  Illustrations of both generalized and modulated solitary waves can be seen in Figure \ref{GSW}.

\begin{figure}[t]
	\begin{center}
		\includegraphics{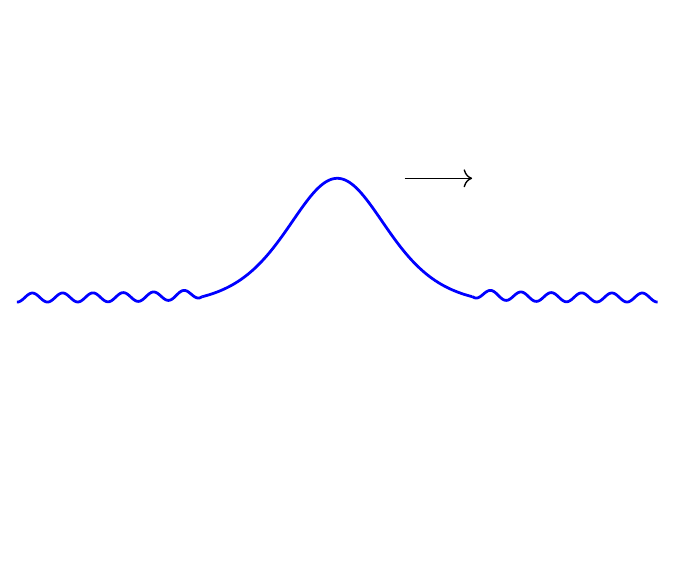} \hspace{0.3cm}
		\includegraphics{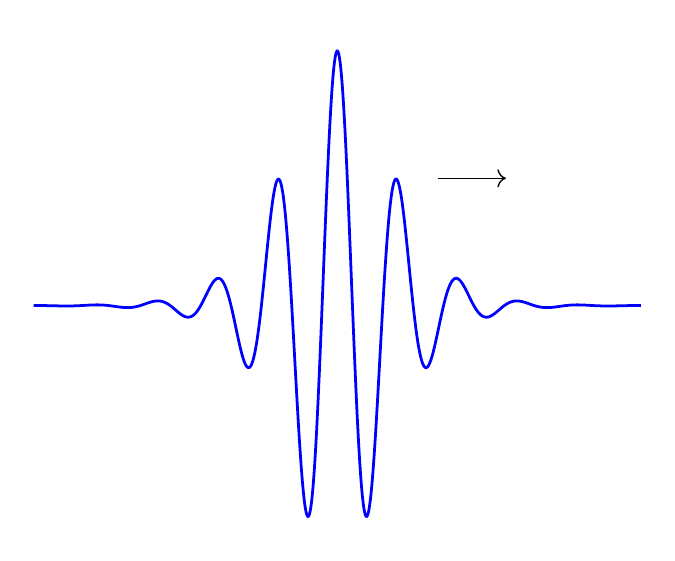}
		\captionsetup{width=.8\linewidth}
		\caption{Illustration of a generalized solitary-wave solution (left) and a modulated solitary-wave (of elevation) solution (right). The arrow indicates the direction of travel, that is, $c>0$.}
		\label{GSW}
	\end{center}
\end{figure}

Unfortunately, many of the existence proofs described above for the full gravity-capillary water wave problem 
rely fundamentally on classical dynamical systems techniques, requiring, in particular, that the equation governing 
the profile of the traveling wave be recast as a first-order system of ordinary differential equations.  Such techniques seem at first glance to not be  applicable to
the gravity-capillary Whitham equation \eqref{gra-cap-Whitham} due to the nonlocal operator $\mathcal{M}_{g,d,T}$.  However, \cite{FS,FS-corrig} recently derived a generalization
of the classical center-manifold theory that is applicable to a wide class of nonlocal problems, and this was further extended in \cite{TWW}  to an even wider class
of nonlocal problems which, as we will show, includes \eqref{gra-cap-Whitham}.  With this in mind, the primary goal of this paper is to use a nonlocal version
of the center manifold theorem and a corresponding normal form reduction to establish the existence of small amplitude generalized solitary and modulated solitary-wave solutions
to the gravity-capillary Whitham equation \eqref{gra-cap-Whitham} in the small surface tension case.  While such solutions were recently shown to exist 
in \cite{JW}, this work relies on direct implicit function theorem techniques.  Our goal is to attempt to establish similar results using a center-manifold reduction technique.

To begin our search for solitary waves, we note that a straightforward nondimensionaliz-ation converts \eqref{gra-cap-Whitham} to
\begin{equation}\label{gra-cap-Whitham2}
u_t+\left(\mathcal{M}_\tau u+u^2\right)_x=0
\end{equation}
where now $\mathcal{M}_\tau$  is a Fourier multiplier with symbol
\[
m_\tau(\xi) = \left((1+\tau \xi^2) \, \frac{\tanh(\xi)}{\xi} \right)^{1/2},
\]
and $\tau>0$ is the Bond number defined in \eqref{bond}.  Making the traveling wave ansatz $u(x,t) = \p(x - ct)$ in \eqref{gra-cap-Whitham2}
and  integrating yields the (nonlocal) stationary profile equation\footnote{Note that thanks to Galilean invariance, one can without loss of generality take the constant
of integration to be zero.  See Remark \ref{rem-supercritical-GSW} for more details.}
\begin{equation}\label{steady-Whitham}
\M_\tau \p - c\p + \p^2 = 0.
\end{equation} 
The profile equation \eqref{steady-Whitham} has received several treatments in recent years and theoretical frameworks for studying them are expanding. Existence results for~\eqref{steady-Whitham} include periodic waves by Hur \& Johnson~\cite{HJ} in 2015 and Ehrnstr\"om, Johnson, Maehlen \& Remonato~\cite{EJMR} in 2019, solitary (e.g.~ integrable) waves
for both strong and weak surface tension by Arnesen~\cite{A16} in 2016,  solitary waves of depression for strong surface tension $\tau > 1/3$ and subcritical wave speed $c<1$ by Johnson \& Wright~\cite{JW} in 2018, as well as generalized solitary waves for weak surface tension $\tau \in (0, 1/3)$ and supercritical wave speed $c>1$ also by~\cite{JW}.  
Each of these known results use either the implicit function theorem and a  Lyapunov-Schmidt reduction or appropriate variational methods.

In this work, we utilize instead an approach based on the recent nonlocal center manifold reduction technique developed by Faye \& Scheel \cite{FS,FS-corrig}
and further refined by Truong, Wahl\'en \& Wheeler \cite{TWW}.
As we will see, this set of techniques provides a unified approach for proving existence of both periodic and solitary waves for~\eqref{steady-Whitham}. The nonlocal center manifold theorem 
bears resemblance to its classical local counterpart, that there exists a neighborhood in a uniform locally Sobolev space where the nonlocal equation is equivalent 
to a local finite-dimensional system of ODEs. After this reduction, tools for ODEs can be applied to find an approximate solution and then to investigate 
its persistence. Provided the solution is sufficiently small in the uniform local Sobolev norm it qualifies as a true solution to the original nonlocal 
equation.  
So, previously mentioned small-amplitude waves are likely to be included in the center manifold. 
However, this framework does not fit nonlocal equations from hydrodynamics. To remedy this, 
Truong, Wahl\'en \& Wheeler~\cite{TWW} have extended this result to a larger class of nonlocal equations. They also demonstrate the strength of this 
reduction technique and exemplify how to extract qualitative information on the solutions from the reduced ODE, which they use to construct an extreme 
solitary wave for the gravity Whitham equation. This reduction technique is also available for local quasilinear problems \cite{CWW2}. 

\begin{rem}\label{rem-ncm_refined}
As noted above, the nonlocal center manifold theorem developed in \cite{FS,FS-corrig} does not directly apply to the profile equation \eqref{steady-Whitham}.
Indeed, one of the hypotheses of Faye \& Scheel's result is that both of the functions\footnote{Here and throughout, $\F$ denotes the Fourier transform.  For the specific
normalization used here, see equation \eqref{FT} below.}
 $\F^{-1}(m_\tau^{-1})$ and $\partial_x\F^{-1}(m_\tau^{-1})$ are integrable  and exhibit exponential decay, which are highly non-trivial properties.
While the exponential decay and integrability of $\F^{-1}(m^{-1}_\tau)$ was recently established in \cite{EJMR}, this reference
also unfortunately shows that $\partial_x\F^{-1}( m_\tau^{-1})$ is not an integrable function.  By carefully considering the methodologies used in \cite{FS,FS-corrig},
Truong, Wahl\'en and Wheeler were recently able to circumvent this difficulty in \cite{TWW}, where they present a refinement of the result in \cite{FS,FS-corrig} which does not rely on the integrability of $\F^{-1}( m_\tau^{-1})'$. The authors explain the purpose of this hypothesis is to establish the Fredholmness
of the linearized operator obtained by linearizing \eqref{steady-Whitham} about $\p=0$. Fortunately, this can be checked directly by other means
for equations of the form \eqref{steady-Whitham}, which is one of the achievements of the refinement \cite{TWW}.  It is technically this refinement which we use in our analysis. For comparison, we note that such properties of $\F^{-1}(m_\tau^{-1})$ are not needed in the implicit function theorem approach used 
by Johnson \& Wright \cite{JW}. 
\end{rem}

We now provide an outline of the paper, as well as state the main results.
We begin in Section \ref{sec-lin_op}  by inverting the operator $\mathcal{M}_\tau$ in \eqref{steady-Whitham}, thereby recasting the profile equation
into the form studied in \cite{FS,FS-corrig,TWW}.  We then study the equation
\[
m_\tau(\xi)-c = \left((1 + \tau \xi^2) \frac{\tanh(\xi)}{\xi}\right)^{1/2}-c = 0,
\] 
which gives solutions to the linearized equation of~\eqref{steady-Whitham} about the trivial solution $\p=0$.
By writing $c^{-2} = \alpha$, $\tau c^{-2} = \beta$ and rearranging the terms, the above equation is recognized as the well-known linear dispersion 
relation for purely imaginary eigenvalues in two-dimensional capillary-gravity water wave equations, modeling the motion of a perfect unit-density 
fluid with irrotational flow under the influence of gravity and surface tension in finite depth: see, for example, the works of Kirchg\"assner~\cite{Kirchgassner},
Buffoni, Groves \& Toland~\cite{BGT}, Amick \& Kirchg\"assner~\cite{AK} and Diass \& Iooss~\cite{DI}. These classical bifurcation 
curves in the $(\beta, \alpha)$-plane naturally guide us in selecting two parameter curves where, restricting ourselves now to the case of small surface tension $\tau\in(0,1/3)$,
we expect generalized solitary-wave and modulated  solitary-wave solutions could be found: see Figure~\ref{bif-diag} below.  We further establish a key Fredholm property
for the associated linearized operator in Section \ref{sect-lin-op} which is required for the application of the center manifold theorem.

With this preliminary linear analysis completed, we then turn towards applying the nonlocal center manifold theorems from \cite{FS,FS-corrig,TWW} to the profile
equation \eqref{steady-Whitham}.  These results, as mentioned before, reduce the nonlocal profile equation considered here to a local ODE
near the equilibrium and provide an algorithmic method of approximating the local ODE. We often refer to this local ODE as the reduced ODE. For completeness, we state the general center manifold theorem from \cite{TWW} in Appendix \ref{CMT-appendix}, and we apply it in Section \ref{sect-CMT} along both bifurcation curves in the $(\beta, \alpha)$-plane of interest.  

In Section \ref{sect-reduced-r1} we approximate the reduced ODE near the bifurcation curve where generalized solitary-wave solutions are expected to be found as a result of an $0^{2+}(\i k_0)$ reversible bifurcation.  
%
%
Up to a standard normal form reduction, rescaling and truncating the nonlinearity, this ends up being almost identical to the normal 
form equation obtained by Iooss \& Kirchg\"{a}ssner in \cite{IK} in their analysis 
of the full gravity-capillary water wave equations.  In particular, the truncated reduced ODE in this case admits an explicit family of small amplitude generalized solitary-wave solutions which are then 
shown to persist as solutions of \eqref{steady-Whitham} by a reversibility argument.  Putting this all together establishes our first main result.

\begin{thm}[Existence of Generalized Solitary Waves]
	\label{thm-existence-GSW-intro} For each sufficiently small $\mu \in \R$, there exists a family of generalized solitary waves to the gravity-capillary Whitham equation with wave speed $c= 1+\mu$ and $\tau < 1/3$, 
	given by
	\[\begin{split}\p(x)&=\frac{3}{2}|\mu|\rho^{1/2} \mathrm{sech}^2\left(\frac{\rho^{1/4}\sigma^{1/2}|\mu|^{1/2}x}{\sqrt{2}}\right) + \frac{\mu}{2}(1-\mathrm{sgn}(\mu)\rho^{1/2}) \\
	&\quad +|\mu|k^{1/2} \cos\Big( (k_0+\O(\mu))x + \Theta_* + \O(\mu)\Big) + \O(\mu^2\rho^{1/2}),
	\end{split}\]
  where $\Theta_* \in \R / 2\pi \ZZ$ is arbitrary, $\sigma = (1/3-\tau)^{-1}$, $\rho = 1+24k$, $k_0>0$ is such that $m_\tau(k_0)=1$, and 
  $k =\O(|\mu|^{-1-2\kappa})$ for any $\kappa \in [0, 1/2)$.
\end{thm}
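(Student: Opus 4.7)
The plan is to reduce the nonlocal profile equation \eqref{steady-Whitham} to a finite-dimensional reversible ODE via the nonlocal center manifold theorem of \cite{TWW}, bring this ODE to a normal form that closely mirrors the one studied by Iooss \& Kirchg\"assner in \cite{IK} for the full gravity-capillary water-wave problem, and finally transfer their construction of generalized solitary waves to the present setting using reversibility.

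Along the bifurcation curve $c = 1 + \mu$ with $\tau \in (0, 1/3)$ fixed, the symbol $m_\tau(\xi) - c$ has, at $\mu = 0$, a double zero at $\xi = 0$ together with a simple pair of purely imaginary roots $\pm \i k_0$ determined by $m_\tau(k_0) = 1$; this is the reversible $0^{2+}(\i k_0)$ resonance identified in Section \ref{sect-lin-op}. Applying the center manifold theorem from \cite{TWW} reduces \eqref{steady-Whitham} to a smooth four-dimensional ODE on coordinates $(A, B, C, \bar C) \in \R^2 \times \CC$ associated to the kernel modes and the pair of purely imaginary roots. Because $\M_\tau$ commutes with $x \mapsto -x$, the reduced ODE inherits the reversibility symmetry $(A, B, C) \mapsto (A, -B, \bar C)$.

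Next, I would perform a normal-form change of variables to bring the reduced ODE, up to polynomial order three and remainders flat on the center manifold, into the canonical $0^{2+}(\i k_0)$ normal form of \cite{IK}. Computing the leading resonant coefficients would show that they are governed by $\sigma = (1/3 - \tau)^{-1}$ and $k_0$, matching after rescaling the quantities produced by the capillary-gravity water-wave problem. Truncating this normal form at quadratic order yields an integrable system whose explicit bounded reversible orbits are precisely the ansatz in the theorem: a $\sech^2$ pulse on top of a constant shift $\tfrac{\mu}{2}(1 - \mathrm{sgn}(\mu)\rho^{1/2})$ together with a small harmonic ripple of amplitude $|\mu| k^{1/2}$, wave number $k_0 + \O(\mu)$, and arbitrary phase $\Theta_*$, where $\rho = 1 + 24k$.

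The final step, and what I expect to be the main difficulty, is the persistence of these explicit solutions under the higher-order (non-truncated) terms in the normal form. Following \cite{IK}, I would exploit reversibility: seek a solution lying in the reversible section $\{B = 0,\ \IIm C = 0\}$ at $x = 0$, flow it forward in $x$, and show it asymptotes to a small periodic orbit as $x \to +\infty$; reversibility then automatically extends this to a bounded solution on all of $\R$. Setting this up as an intersection problem indexed by the $\sech^2$ amplitude and the ripple parameters $(k, \Theta_*)$, then applying an implicit function theorem perturbing off the truncated solution, should yield the stated family. The delicate point is to keep the normal-form remainder uniformly small along the entire tail $x \in [0, \infty)$; as in \cite{IK}, this forces the ripple amplitude to remain polynomially small compared to $|\mu|$, which explains the restriction $k = \O(|\mu|^{-1-2\kappa})$ with $\kappa \in [0, 1/2)$. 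Once persistence is in hand, the generalized solitary wave is recovered from the center-manifold parametrization of Section \ref{sect-CMT}, producing the $\O(\mu^2 \rho^{1/2})$ correction stated in the theorem.
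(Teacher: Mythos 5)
Your overall route is the same as the paper's: center manifold reduction via \cite{TWW} along $c=1+\mu$, identification of the reversible $0^{2+}(\i k_0)$ resonance, a normal form matching Iooss \& Kirchg\"assner's system for the water-wave problem, explicit $\mathrm{sech}^2$-plus-ripple solutions of the truncation, and persistence by reversibility. However, your final step contains a genuine misunderstanding of where the condition on $k$ comes from, and it omits a step that is essential in the nonlocal setting. First, the persistence of the truncated solutions under the higher-order reversible terms (which the paper takes from \cite[Theorem 3.10]{HI}, going back to \cite{IK}) imposes a \emph{lower} bound on the ripple amplitude, $r=|\mu|k^{1/2}>r_*(\mu)=\O(|\mu|^{1/2})$, i.e.\ roughly $k\gtrsim|\mu|^{-1}$: ripples below the threshold $r_*(\mu)$ can be destroyed by the remainder, so only sufficiently large ripples survive. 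Your claim that the persistence argument ``forces the ripple amplitude to remain polynomially small compared to $|\mu|$'' and that this explains $k=\O(|\mu|^{-1-2\kappa})$ inverts the logic; note that with $k\sim|\mu|^{-1-2\kappa}$ the ripple amplitude $|\mu|k^{1/2}\sim|\mu|^{1/2-\kappa}$ is in fact large compared to $|\mu|$.

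Second, the upper bound $k=\O(|\mu|^{-1-2\kappa})$ with $\kappa\in[0,1/2)$ serves an entirely different purpose that your proposal does not address: the center manifold theorem only identifies solutions of the \emph{modified} (cutoff) equation \eqref{modified-gra-cap}; to conclude that the constructed profile solves the original equation \eqref{eq} one must verify the smallness $\|\p\|_{H^5_{\uu}}\lesssim\delta$. With the choice $k=k'|\mu|^{-1-2\kappa}$, the amplitudes of $\Aa,\Bb,\cC$ are $\O(|\mu|^{1/2-\kappa})$, hence so are $\p,\dots,\p'''$ (and, by differentiating the reduced ODE, $\p''''$ and $\p'''''$), which makes the $H^5_{\uu}$ norm small for $\mu$ small and closes the argument. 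Without this verification your construction only yields solutions of the cutoff equation, and without the lower bound $|\mu|k^{1/2}>r_*(\mu)$ the persistence theorem you need does not apply at all; both points must be supplied to obtain the theorem as stated.
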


It is interesting to note that the above allows for an 
asymptotic phase shift in the cosinus term between $x = -\infty$ and $x=\infty$ of order $\O(\rho^{1/4}|\mu|^{1/2})$.  
Further, after a Galilean change of variables, all the generalized solitary-wave solutions found above may be seen to have supercritical 
wave speed $c>1$: see Remark~\ref{rem-supercritical-GSW} below for details. Note, however, this result does not establish that some waves have asymptotic oscillations which are exponentially small in relation to the solitary term as in Johnson \& Wright~\cite{JW}. On the other hand, we are able to allow for a more general asymptotic phase shift between $x=\pm\infty$.

In Section~\ref{sect-reduced-r2} we analogously treat the bifurcation curve in the $(\beta, \alpha)$-plane where modulated solitary waves are expected to be found as a result of a Hamiltonian-Hopf bifurcation, also known as an $(\i s)^2$ bifurcation. By computing the necessary center manifold coefficients and performing the appropriate normal form reduction, we again find the results
from \cite{IP} applicable, thus establishing our second main result. 

%

\begin{thm}[Existence of Modulated Solitary Waves]
	\label{thm-existence-r2-intro}  Fix  $s>0$ and set 
\begin{align*}
c_0^2 = \left(\frac{s^2}{2\sinh^2(s)}+ \frac{s}{2\tanh(s)}\right)^{-1}, \quad \tau_0 = c_0^2 \left(-\frac{1}{2\sinh^2(s)}+\frac{1}{2s\tanh(s)}\right),
\end{align*}
  so that $c_0^{-2}(1+\tau_0)\sinh(s) = s \cosh(s)$.  Then, for  $\mu<0$ sufficiently small, there exist two distinct modulated solitary-wave solutions to the gravity-capillary Whitham equation with amplitude of order $\O(|\mu|^{1/2})$, surface tension $\tau_0$
and subcritical wave speed $c_0 + \mu <1$. More precisely, the modulated solitary-wave solutions are described asymptotically via
	\[\p(x)=\sqrt{\frac{-8 q_0 \mu}{q_1}} \sech(\sqrt{q_0\mu} \, x) \cos\left(sx + \Theta_* + \O\big(|\mu|^{1/2}\big)\right) + \O(\mu^2), \hspace{0.2cm} \Theta_* \in \{0, \pi\},\]  
	 which have an asymptotic phase shift between $x=-\infty$ and $x=\infty$ of order $\O(|\mu|^{1/2})$.  Here, the coefficients $q_0$ and $q_1$ are
	\[\begin{split} 
	q_0 = -\frac{2}{m_\tau''(s)}, \quad q_1 = -\frac{4(-c_0+m_\tau(2s))^{-1} + 8(1-c_0)^{-1}}{m''_\tau(s)},
	\end{split}\] 
	and are both negative.
\end{thm}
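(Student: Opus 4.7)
The plan is to establish Theorem \ref{thm-existence-r2-intro} by applying the nonlocal center manifold theorem at the base parameter pair $(\tau_0, c_0)$ along the Hamiltonian--Hopf bifurcation curve identified in Section \ref{sec-lin_op}, reducing the profile equation \eqref{steady-Whitham} to a reversible four-dimensional ODE, and then invoking the classical analysis of Iooss \& P\'erou\`eme \cite{IP} on the reversible $(\i s)^2$ normal form. First, one verifies at $(\tau_0, c_0)$ that the linear operator $\M_{\tau_0} - c_0$ admits $\pm \i s$ as purely imaginary double eigenvalues with one-dimensional geometric eigenspace: the relation $c_0^{-2}(1+\tau_0)\sinh(s) = s \cosh(s)$ is precisely the criticality condition $m_{\tau_0}'(s) = 0$, while the stated formulas for $c_0, \tau_0$ yield $m_{\tau_0}(s) = c_0$. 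The Fredholm hypothesis required by the center manifold theorem of Appendix \ref{CMT-appendix} is furnished by the analysis of Section \ref{sect-lin-op}.

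Setting $c = c_0 + \mu$ for small $\mu \in \R$, the next step is to apply the nonlocal center manifold theorem to reduce \eqref{steady-Whitham} to a parameter-dependent four-dimensional ODE on the local center manifold, whose phase space is spanned by the generalized eigenvectors of $\M_{\tau_0} - c_0$ at $\pm \i s$. Reversibility of \eqref{steady-Whitham} under $\p(x) \mapsto \p(-x)$ transfers to the reduced ODE as a linear involution compatible with the $(\i s)^2$ normal form structure. In adapted complex coordinates $(A, B, \bar A, \bar B)$, I would compute the leading quadratic and cubic Taylor coefficients of the reduced vector field using the algorithmic formulas built into the center manifold construction; a reversible near-identity transformation then brings the reduced system into the standard reversible $(\i s)^2$ normal form modulo high-order remainders. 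The truncated normal form is integrable, and for $\mu$ on the appropriate side of zero admits an explicit family of modulated solitary-wave solutions with $\sech$ envelope and phase $sx + \Theta_*$; by the persistence theorem of \cite{IP}, provided both $q_0$ and $q_1$ are nonzero, exactly two of these (corresponding to $\Theta_* \in \{0, \pi\}$) extend to exponentially decaying reversible solutions of the full reduced ODE.

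The main obstacle is the explicit identification of $q_0$ and $q_1$ from within the nonlocal framework. The coefficient $q_0 = -2/m_\tau''(s)$ records the transverse unfolding of the double purely imaginary eigenvalues as $\mu$ varies, and follows from a standard perturbative expansion using $\partial_c (m_{\tau_0}(\xi) - c)|_{\xi = s} = -1$ together with the non-degeneracy $m_{\tau_0}''(s) \ne 0$; the sign $q_0 < 0$ corresponds to $m_{\tau_0}''(s) > 0$ at a fold point where two real dispersion branches collide. The coefficient $q_1$ encodes the cubic self-interaction obtained after passing the quadratic nonlinearity $\p^2$ through the center manifold: the summands $4(-c_0 + m_\tau(2s))^{-1}$ and $8(1-c_0)^{-1}$ track the inversion of $\M_{\tau_0} - c_0$ on the second-harmonic and zero-harmonic components generated by $A^2$ and $A\bar A$, weighted by the feedback onto the $\pm \i s$ mode. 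Confirming that these contributions combine to yield $q_1 < 0$ across the admissible range of $s$ is the key quantitative step closing the hypotheses of \cite{IP}.

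Finally, the center manifold embedding returns the modulated solitary wave of the reduced ODE to the claimed solution of \eqref{steady-Whitham}. The $\O(|\mu|^{1/2})$ asymptotic phase shift between $x = \pm\infty$ arises from the nonlinear frequency correction characteristic of the $(\i s)^2$ normal form, while the $\O(\mu^2)$ remainder absorbs both the difference between the truncated and full reduced flows and the embedding of the reduced solution into the ambient nonlocal equation.
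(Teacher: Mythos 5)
Your proposal follows essentially the same route as the paper: recast the profile equation via the nonlocal center manifold theorem along the curve $C_2$, reduce to a reversible four-dimensional ODE undergoing an $(\i s)^2$ bifurcation, pass to the normal form, use the explicit $\sech$-envelope homoclinics of the truncated system and the persistence result of Iooss \& P\'erou\`eme to retain exactly the two reversible solutions $\Theta_*\in\{0,\pi\}$, with $q_0$ and $q_1$ arising from the parameter unfolding and from the zero- and second-harmonic interactions, just as in the paper's Appendices C.2 and D.2. Two minor remarks: the displayed relation in the theorem is the dispersion relation $m_{\tau_0}(s)=c_0$ rather than the criticality condition $m_{\tau_0}'(s)=0$ (both are encoded in the formulas for $c_0,\tau_0$), and the sign $q_1<0$ that you defer is settled in the paper by the elementary observations $c_0<1$, $m_{\tau_0}(2s)>c_0$ and $m_{\tau_0}''(s)>0$ (equivalently $\ell''(s)<0$).
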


The solutions constructed in Theorem \ref{thm-existence-r2-intro} correspond to (distinct) modulated solitary waves of elevation ($\Theta_*=0$) and depression
($\Theta_*=\pi$).  Note that 
Figure~\ref{GSW} depicts a modulated solitary-wave solution of elevation.

\begin{rem}
As mentioned above, the center manifold methodology used here provides a unified approach for proving existence of both periodic and solitary waves for~\eqref{steady-Whitham}.
Consequently, one could continue the above line of investigation to establish the existence of other classes of solutions as well including, for instance,
the subcritical, small amplitude solitary waves of depression in the case of large surface tension $\tau>1/3$ constructed in \cite{JW}. This specific case is very similar to the gravity Whitham equation studied by Truong, Wahl\'{e}n and Wheeler~\cite{TWW} and is thus excluded here. 
\end{rem}

This paper provides further connections between the model equation~\eqref{gra-cap-Whitham} and the two-dimensional gravity-capillary water wave problem. It also exemplifies the application of nonlocal center manifold reduction in existence theory. A natural continuation of this paper could be to 
investigate the existence of multipulse modulated solitary-wave solutions as in~\cite{BG}, as well as bifurcation phenomena in other parameter regions.

\subsubsection*{Notation}
The following notation will be used throughout this work.
\begin{itemize}
        \item[--] For $\sigma \in \R$, we define the {\it $\sigma$-weighted $L^p$ spaces}
          \[ L^p_{\sigma}:=\left\{f\colon \R \to \R  \, \Big| \, \int_\R |f|^p \omega_\sigma^p  \dif x \right\}. \]
        Here the {\it weight function} $\omega_\sigma\colon \R \to \R$ is positive and smooth. Also, $\omega_\sigma$ is constantly 1 on $[-1,1]$ and equals $\exp(\sigma |x|)$ for $|x| \ge 2$. 
 	\item[--] Similarly, we define the {\it weighted Sobolev spaces}       
        \[W^{m,p}_\sigma := \left\{f\colon \R \to \R \, \Big| \, f^{(n)} \in L^p_\sigma, \, \, \text{for} \, \, 0 \leq n \leq m\right\}. \]
        We have the natural inclusions $W_{\sigma_2}^{m,p} \subset W_{\sigma_1}^{m,p}$ whenever $\sigma_1 < \sigma_2$. For $p=2$, we denote the Hilbert space $W^{m,2}_\sigma$ by $H^m_\sigma$. 
	\item [--] The non-weighted Sobolev spaces are denoted by $W^{m, p}$ and the special case $W^{m,2}$ is denoted by $H^m$. 
  \item[--] The uniform locally $H^m$ space is
	\[H^m_{\uu}:=\left\{ f \colon \R \to \R \,  \Big| \, \|f\|_{H^m_{\uu}} < \infty\right\} \hspace{0.2cm}\text{with}\hspace{0.2cm} \|f\|_{H^m_{\uu}}:=\sup_{y \in \R}\|f(\, \cdot \, + y)\|_{H^m([0,1])}.\]

	\item[--] We use the following scaling of the Fourier transform:
\begin{equation}\label{FT}
\F f(\xi) = \hat{f}(\xi) \coloneqq \int_\R f(x)\exp(-\i x\xi)\dif x \hspace{0.5cm}\text{and}\hspace{0.5cm}\F^{-1}g(x) =  \frac{1}{2\pi} \F g(-x).
\end{equation}
%
\end{itemize}

\section{The operator equation}\label{sec-lin_op}

In this section, we begin our study of the nonlocal profile equation \eqref{steady-Whitham}.  Observe that since $m_\tau$ is strictly positive on $\R$,
the operator $\M_\tau$ is invertible on any Fourier based space.  We denote the inverse of $\M_\tau$ by $\L_\tau$, defined via
\[
\widehat{\,\L_\tau f\,}(\xi) = \ell_\tau(\xi)\hat{f}(\xi),\quad \ell_\tau(\xi)\coloneqq m_\tau(\xi)^{-1}.
\]
In particular, the profile equation \eqref{steady-Whitham} can be written in the ``smoothing'' form
\begin{equation}\label{smoothing}
\p - cK_\tau * \p + K_\tau * \p^2 = 0,
\end{equation}
where here $K_\tau:=\F^{-1}\ell_\tau$ denotes the convolution kernel corresponding to the operator $\L_\tau$. 
Observe that \eqref{smoothing} is similar to the profile equation for the gravity Whitham equation (i.e. \eqref{gra-cap-Whitham} with $T=0$), but
now  with a \emph{nonlocal} nonlinearity.

As we seek small amplitude solutions of \eqref{smoothing}, we begin by linearizing \eqref{smoothing} about $\p = 0$ which, after applying the Fourier transform, yields the equation
\[
(1 - c\ell_\tau(\xi)) \widehat{v}(\xi) = 0,
\]
which we seek to solve for non-trivial $v\in L^2(\R)$.  This motivates considering the equation 
\begin{equation}\label{lin-dis-rel} 
1 - c\ell_\tau(\xi) = 0,\quad{\rm i.e.}\quad \xi \cosh(\xi) = \left(\frac{1}{c^2} + \frac{\tau}{c^2}\xi^2\right) \sinh(\xi).
\end{equation}
By setting 
\[
\alpha = \frac{1}{c^2} \hspace{0.5cm}\text{and}\hspace{0.5cm}\beta = \frac{\tau}{c^2},
\]
equation~\eqref{lin-dis-rel} is recognized as the well-known linear dispersion relation for purely imaginary eigenvalues in the two-dimensional water wave equations in finite depth:
see, for example, \cite{RS_JohnsonBook,Whitham_book}.
The importance of~\eqref{lin-dis-rel} in finding solutions to the gravity-capillary water wave equations was recognized by Kirchg\"assner~\cite{Kirchgassner}, followed by a multitude of other papers (see e.g.~\cite{AK, DI, IK, BGT}). Looking at the bifurcation curves in the $(\beta, \alpha)$-planes for the classical gravity-capillary water wave problem, it is natural to expect the following:
\begin{itemize}
	\item that modulated solitary-wave solutions may be found as a result of a Hamiltonian--Hopf bifurcation, when crossing the curve
	\[
	C_2 = \bigg\{(\beta, \alpha) = \bigg(-\frac{1}{2\sinh^2(s)} + \frac{1}{2 s \tanh(s)}, \frac{s^2}{2\sinh^2(s)} + \frac{s}{2\tanh(s)}\bigg) \, \Big|\, s \in [0, \infty)\bigg\}
	\]
	from below;
	\item that generalized solitary-wave solutions may be found as a result of an $0^{2+}(\i k_0)$ bifurcation, when crossing the curve 
	\[
	C_3 = \Big\{(\beta, \alpha) \, \Big|\, \beta \leq \frac{1}{3} \, \, \text{and} \, \, \alpha = 1 \Big\}
	\]
	either from above or below. Here, $k_0 \in \R$ satisfies equation~\eqref{lin-dis-rel} for a fixed $\beta$ along $C_3$;
	\item and that solitary-wave solutions of depression may be found as a result of an $0^{2+}$ bifurcation, when crossing the curve
	\[
	C_4 = \Big\{(\beta, \alpha) \,\Big|\, \beta \geq \frac{1}{3} \, \, \text{and}\,\, \alpha = 1 \Big\}
	\]
	from above.
\end{itemize}

There is an additional curve $C_1$ in the $(\beta,\alpha)$-plane along which one may expect the existence of multi-pulse solitary waves~\cite{BGT}. The argument in~\cite{BGT} uses the Hamiltonian structure of the full water wave problem. While equation~\eqref{gra-cap-Whitham} exhibits a variational formulation in the form investigated by Bakker \& Scheel~\cite{BS}, its smoothing form~\eqref{eq} below does not. As $m_\tau$ does not have an $L^1$ Fourier transform, using results in~\cite{BS} would therefore call for a careful examination and adaptation. Thus, it is more appropriate to consider this bifurcation phenomenon in a separate paper and we will not comment further regarding $C_1$. See Figure~\ref{bif-diag} for depictions of the curves $C_1,C_2,C_3$ and $C_4$ in the $(\beta, \alpha)$-plane.  

It is illustrative to understand what these curves mean in terms
of the physical $(\tau,c)$ parameters.  For example, crossing $(\beta_0, \alpha_0)=(\beta_0,1)\in C_3$ 
corresponds to studying \eqref{smoothing} for $\beta=\beta_0$ and $\alpha=\alpha_0-\tilde{\mu}=1-\tilde{\mu}$ for some $|\tilde{\mu}| \ll 1$ which, in terms of $\tau$ and $c$,
gives
\begin{equation}\label{parameter-r1}
\tau=\beta_0 c_0^2<\frac{1}{3}~~~{\rm and}~~~ c=c_0+\mu~~{\rm with}~~c_0=1~~{\rm and}~~|\mu|\ll 1.
\end{equation}
Thus, crossing the curve $C_3$ correspond to waves with weak surface tension, while the wave speed is nearly critical.  Likewise,
crossing a point $(\beta_0,\alpha_0)\in C_2$ from below corresponds to studying \eqref{smoothing} with $\beta=\beta_0$
and $\alpha=\alpha_0+\tilde{\mu}$ with $0<\tilde\mu\ll 1$ which, in terms of $\tau$ and $c$ gives
\begin{equation}\label{parameter-r2}
\tau=\beta_0 c_0^2~~{\rm and}~~c=c_0+\mu,~~{\rm with}~~ c_0=\alpha_0^{-1/2}~~{\rm and}~~0<-\mu\ll 1.
\end{equation}
Since $\alpha_0>1$, it follows that $\tau\in(0,1/3)$, i.e.~the surface tension is again weak, and the speed $c$ is subcritical.  By similar reasoning, crossing
the curve $C_4$ from above corresponds to strong surface tension, i.e.~$\tau>1/3$, and subcritical speeds.  Note that in this work, 
we focus only on bifurcation phenomena connected to the curves $C_2$ and $C_3$.  
The $0^{2+}$ bifurcation along $C_4$, as one might expect, resembles the one already covered in~\cite{TWW} for the gravity Whitham equation 
and is thus excluded here. 

\begin{figure}[t]
	\begin{center}
		\includegraphics[scale=1.2]{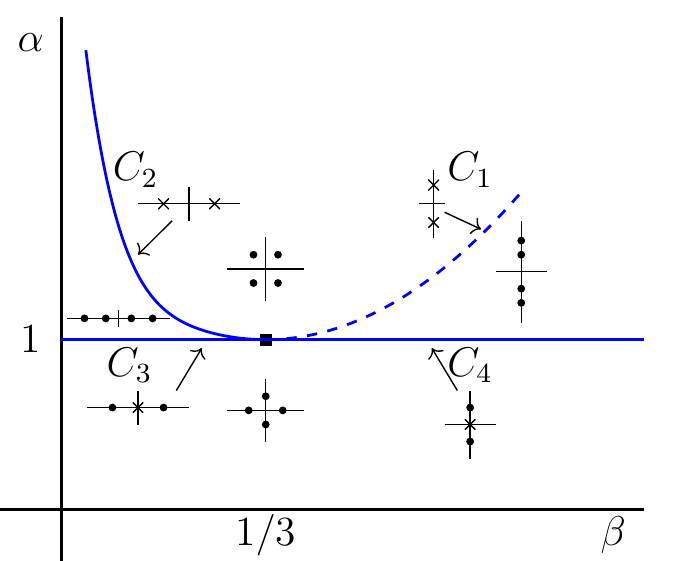}
		\captionsetup{width=.8\linewidth}
		\caption{Sketch of the bifurcation curves $C_1,C_2, C_3$ and $C_4$ along with zeros of the function $1-c\ell_\tau(\xi)$, which is the same as those of $(\alpha + \beta \xi^2)\sinh(\xi)-\xi\cosh(\xi)$. Here, dots and crosses represent algebraic multiplicity one and two, respectively.}
		\label{bif-diag}
	\end{center}
\end{figure}
%
%

As mentioned previously, we approach the above bifurcation phenomena for the nonlocal profile equation \eqref{smoothing} by following
the center manifold reduction strategy in \cite{FS,FS-corrig,TWW}.  To this end, we rewrite \eqref{smoothing} as
\begin{equation}\label{eq}
\T \p + \N(\p, \mu) = 0,
\end{equation}
which is now of the structural form studied in Faye \& Scheel~\cite{FS,FS-corrig}, where  here
\[
\T \colon \p \mapsto \p -  c_0K_\tau * \p \hspace{0.5cm} \text{and} \hspace{0.5cm} \N \colon (\p, \mu) \mapsto K_\tau * (\p^2 - \mu \p).
\]
Since $\tau$ is fixed, the subscript $\tau$ will be dropped for notational convenience. Our goal is to study the operator equation \eqref{eq} for parameters $\tau,c_0$ corresponding to \eqref{parameter-r1}, corresponding to crossing $C_3$, as well as for $\tau,c_0$ satisfying \eqref{parameter-r2} corresponding to crossing $C_2$. The first step of this analysis is to understand the linear operator $\T$, which we now turn to studying.

\section{\texorpdfstring {The linear operator $\T$}{The linear operator T}}
\label{sect-lin-op}

Fix $\tau\in(0,1/3)$.
As preparation for our forthcoming bifurcation analysis, and following the general strategy in \cite{FS,FS-corrig,TWW}, in this section we study
the linear operator\footnote{Recall that since $\tau$ is fixed, for convenience the corresponding subscript will be dropped from $K_\tau$ and $\ell_\tau$.}
\[
\T\colon \p \mapsto \p - c_0 K * \p , \hspace{0.5cm} H^5_{-\eta} \to H^5_{-\eta},
\]
for  $\eta > 0$ along the two parameter curves~\eqref{parameter-r1} and~\eqref{parameter-r2}.  Note that $\T$ is precisely the linearization
of \eqref{eq} about the trivial solution $\p=0$ and, as such, it is crucial to understand the Fredholm and invertibility properties of $\T$ along
the curves $C_2$ and $C_3$. Key to this analysis is an understanding of convolution kernel $K$.  The relevant properties are detailed in the following result.

\begin{prop} \label{K-prop} The convolution kernel $K$ is even. Moreover, we have
\begin{enumerate}[(i)]
\item the singularity of $K$ as $|x|\to 0$ is 
\[\lim_{x\to 0} \sqrt{|x|}K(x) = \frac{1}{\sqrt{2\pi \tau}};\]
\item $K$ has exponential decay as $|x|\to \infty$, that is
\[|K(x)| \lesssim \exp(-\eta |x|) \hspace{0.5cm} \text{for} \hspace{0.5cm} |x| > 1,\]
where $0 < \eta < \eta^* \coloneqq \min\{1/\sqrt{\tau}, \pi/2\}$. 
\end{enumerate}
\end{prop}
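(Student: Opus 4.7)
The plan is to verify the three claims in turn. Evenness is immediate: $\ell(\xi) = m_\tau(\xi)^{-1}$ is real-valued, strictly positive, and even, so by the convention~\eqref{FT}, $K = \mathcal{F}^{-1}\ell$ is real and even.

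For the singular behaviour at the origin, I would isolate the leading-order asymptotics of $\ell$ at high frequency. From $\ell(\xi)^2 = \xi/((1+\tau\xi^2)\tanh(\xi))$, using $\tanh(\xi)\to \pm 1$ as $\RRe\xi\to\pm\infty$ together with $1+\tau\xi^2 = \tau\xi^2(1+O(\xi^{-2}))$, one obtains
\[
\ell(\xi) = \frac{1}{\sqrt{\tau|\xi|}}\bigl(1+O(|\xi|^{-2})+O(e^{-2|\xi|})\bigr)
\]
as $|\xi|\to\infty$. Choosing a smooth even cutoff $\chi$ equal to $0$ on $[-1,1]$ and $1$ outside $[-2,2]$, I split
\[
\ell(\xi) = \frac{\chi(\xi)}{\sqrt{\tau|\xi|}} + r(\xi),
\]
where $r$ is continuous on $\R$, decays like $|\xi|^{-5/2}$, and lies in $L^1(\R)$; hence $\mathcal{F}^{-1}r$ is continuous and bounded. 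Combining this with the classical identity $\mathcal{F}^{-1}(|\xi|^{-1/2})(x) = (2\pi|x|)^{-1/2}$ (derivable from $\int_0^\infty u^{-1/2}e^{-\i u}\,du = \sqrt\pi\, e^{-\i\pi/4}$ by rescaling) and the fact that $(\chi(\xi)-1)/\sqrt{\tau|\xi|}$ is a compactly supported $L^1$ function with smooth bounded inverse Fourier transform yields $K(x) = 1/\sqrt{2\pi\tau|x|} + O(1)$ as $x\to 0$, proving (i).

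The main work is the exponential decay in (ii), which I would handle by contour deformation. The function $\ell^2$ is meromorphic on $\CC$: the denominator $(1+\tau\xi^2)\tanh(\xi)$ vanishes at $\pm\i/\sqrt\tau$ and at $\pm\i k\pi$ for $k\geq 1$ (the apparent pole at $\xi=0$ is cancelled by the numerator), while $\tanh$ has poles at $\pm\i(k+\tfrac12)\pi$ for $k\geq 0$, producing zeros of $\ell^2$. On the imaginary axis the singularity of $\ell^2$ nearest to the origin therefore lies at height $\eta^* = \min\{1/\sqrt\tau,\,\pi/2\}$, and $\ell^2$ is analytic and strictly positive on the strip $S = \{|\IIm\xi|<\eta^*\}$. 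Since $S$ is simply connected, $\ell = \sqrt{\ell^2}$ lifts uniquely to a single-valued analytic function on $S$ agreeing with the positive real square root on $\R$. The high-frequency asymptotics used above extend into $S$, yielding uniform bounds $|\ell(\xi)| \lesssim |\RRe\xi|^{-1/2}$ and $|\ell'(\xi)| \lesssim |\RRe\xi|^{-3/2}$ on any closed substrip $\{|\IIm\xi|\leq\eta\}$ with $\eta<\eta^*$. For $x\geq 1$, a single integration by parts converts
\[
K(x) = \frac{1}{2\pi}\int_\R e^{\i x\xi}\ell(\xi)\,d\xi = -\frac{1}{2\pi\i x}\int_\R e^{\i x\xi}\ell'(\xi)\,d\xi,
\]
and the right-hand integrand, being absolutely integrable by the $|\RRe\xi|^{-3/2}$ bound, can be shifted to $\R + \i\eta$ (vertical contributions at $\RRe\xi = \pm R$ vanishing by the same bound). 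This yields $|K(x)| \leq \frac{e^{-\eta x}}{2\pi x}\int_\R|\ell'(s+\i\eta)|\,ds \lesssim e^{-\eta x}$ for $x\geq 1$, and evenness of $K$ extends the bound to $x\leq -1$.

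The main obstacle is establishing the analytic continuation of $\ell$ together with the sharp uniform bounds up to the boundary of $S$. Although $\ell^2$ is meromorphic, each zero $\pm\i(k+\tfrac12)\pi$ and each pole $\pm\i/\sqrt\tau$, $\pm\i k\pi$ of $\ell^2$ is a branch point of $\ell$; in particular, for $\tau<1/3$ the branch points $\pm\i\pi/2$ sit precisely on $\partial S$. Verifying that $S$ itself contains no branch points, and that the $|\RRe\xi|^{-1/2}$ asymptotic holds uniformly as $\eta\nearrow\eta^*$, requires a careful inspection of $\ell^2$ on horizontal lines in $\CC$, but is direct; once this is in hand, the contour deformation and decay estimate proceed as sketched.
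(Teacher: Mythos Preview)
The paper does not actually prove this proposition; it simply cites \cite[Theorem~2.7]{EJMR}. Your proposal supplies a direct argument in place of that citation, and the approach---extracting the leading $|\xi|^{-1/2}$ behaviour of the symbol for the local singularity, then shifting the Fourier integral into the strip of analyticity of $\ell$ for the exponential decay---is the standard one and is essentially what the cited reference carries out.

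Your argument is correct. One small verbal slip: you write that ``$\ell^2$ is analytic and strictly positive on the strip $S$,'' but off the real axis $\ell^2$ is complex-valued; what you need, and what the next sentence actually uses, is that $\ell^2$ is analytic and \emph{nonvanishing} on $S$. You have in fact already established this: the only zeros of $\ell^2$ arise from poles of $\tanh$ at $\pm\i(k+\tfrac12)\pi$, and the only poles of $\ell^2$ arise from $\xi=\pm\i/\sqrt\tau$ and $\xi=\pm\i k\pi$ with $k\ge 1$, all lying on the imaginary axis at height at least $\eta^*$. So the ``main obstacle'' you flag in your final paragraph is already resolved by your earlier analysis: there are no branch points of $\ell$ inside $S$, and the uniform $|\RRe\xi|^{-1/2}$ and $|\RRe\xi|^{-3/2}$ bounds on closed substrips follow directly from the asymptotic expansion you wrote down (which is uniform in $\IIm\xi$ by compactness in the transverse direction). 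The integration by parts is justified since the boundary terms are $O(R^{-1/2})$, and the subsequent contour shift is then a routine application of Cauchy's theorem to the absolutely integrable $\ell'$.
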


For a proof, see Ehrnstr\"om, Johnson, Maehlen \& Remonato~\cite[Theorem 2.7]{EJMR}. An immediate consequence is that $K \in L^1_{\eta}$ for $\eta \in (0, \eta^*)$ and hence, by a straightforward application of Young's inequality, that for such $\eta$ the linear operator 
\[
\T \colon H^5_{-\eta} \to H^5_{-\eta}
\]
is bounded regardless of the choice of $c_0$. A proof of this claim is found in~\cite{TWW} but is repeated here for the readers' convenience. We estimate
\[\begin{split}
\big\|K*\p\big\|^2_{L^2_{-\eta}} &\lesssim \int_\R \left(\int_\R K(y) \p(x-y) \dif y\right)^2 \exp(-2\eta|x|)\dif x \\
&\leq \int_\R \left(\int_\R K(y)\p(x-y)\dif y\right)^2 \exp(-2\eta|x-y|+2\eta|y|)\dif x \\
& = \int_\R \left(\int_\R K(y) \exp(\eta|y|) \cdot \p(x-y)\exp(-\eta|x-y|) \dif y\right)^2 \dif x \\
& = \Big\|\big(K\cdot \exp(\eta|\,\cdot \,|)\big) * \big(\p\cdot \exp(-\eta|\, \cdot \,)\big)\Big\|_{L^2}^2 \\
& \leq \|K\|^2_{L^1_{\eta}} \cdot \|\p\|_{L^2_{-\eta}}^2.
\end{split}\]
This establishes that $\T$ is bounded on $L^2_{-\eta}$. Using that 
\[\frac{\dif}{\dif x^n} (K*\p) = K*\left(\frac{\dif}{\dif x^n}\p\right), \quad \text{for all $n \ge 0$,}\] 
the boundedness of $\T$ on $H^5_{-\eta}$ readily follows. Note that the works \cite{FS,FS-corrig} additionally require $K'\in L^1_{\eta}$ which, by Proposition \ref{K-prop}, 
does not hold in this case.  Next, we follow Truong, Wahl\'{e}n \& Wheeler~\cite{TWW} and study the Fredholm properties of $\T$ using theory for pseudodifferential operators in non-weighted Sobolev spaces $H^5$ from Grushin~\cite{Grushin} (see also Appendix A). 

To this end, we fix $\eta\in(0,\eta^*)$ and consider the conjugated operator
\[
\tilde{\T} \coloneqq M^{-1} \circ \T \circ M, \hspace{0.5cm} H^5 \to H^5,
\]
where $M:H^5\to H^5_{-\eta}$ is multiplication with the strictly positive function $\cosh(\eta \, \cdot \,)$. Noting that conjugation by $M$ preserves Fredholmness and the Fredholm index, we may establish the desired Fredholm properties
of $\T=M\circ\TT\circ M^{-1}$ acting on the weighted space $H^5_{-\eta}$ by studying the operator $\TT$ acting on the non-weighted $H^5$.  These latter properties are established by following the work \cite{Grushin}, where the author relates the pseudodifferential operator $\TT$ acting on $H^5$ to a positively homogeneous function $A$ and determining the winding
number of $A$ around the origin.  The relevant details are summarized in Appendix \ref{Fredholm-appendix}.  

By direct calculation, the symbol of $\TT$ is seen to be 
\[
\t(x, \xi) = 1 - c_0\phi_+(x)\ell(\xi-\i \eta) - c_0\phi_-(x) \ell(\xi+\i \eta),
\]
where $\phi_\pm(x) = \exp(\pm \eta x)/(2\cosh(\eta x))$.  In particular, note that 
\begin{equation}\label{phi_limits}
\left\{\begin{aligned}
\lim_{x\to\infty}\phi_+(x)&=1~~{\rm and}~~\lim_{x\to -\infty}\phi_+(x)=0\\
\lim_{x\to\infty}\phi_-(x)&=0~~{\rm and}~~\lim_{x\to -\infty}\phi_-(x)=1.
	\end{aligned}\right.
\end{equation}
 Following \cite{Grushin}, we define the positive, homogeneous degree-zero function 
\[
A(x_0, x, \xi_0, \xi) \coloneqq \tilde{t}\left(\frac{x}{x_0},\frac{\xi}{\xi_0}\right)
\]
for $x,\xi\in\R$ and $x_0,\xi_0>0$ and study $A$ acting on\footnote{Note $A$ may be extended by continuity down to $x_0=0$ and $\xi_0=0$.}
\[
\overline{\SS^1_+} \times \overline{\SS^1_+}\coloneqq\left\{(x_0,x,\xi_0,\xi)\in\R^4 \, \Big|\, x_0^2+x^2=\xi_0^2+\xi^2=1,~~x_0\geq 0,~~\xi_0\geq 0\right\}.
\] 
According to Proposition \ref{Fredholm-Grushin}, the linear opertaor $\TT$ is Fredholm provided that the function $A$ is smooth in $\overline{\SS^1_+} \times \overline{\SS^1_+}$ and
nowhere vanishing along the boundary $\Gamma$ of $\overline{\SS^1_+}\times\overline{\SS^1_+}$, which can be decomposed into the arcs
\begin{alignat*}{4}
  \Gamma_1& = \{ &(0,1, \xi_0, \xi)  & \mid\, & \xi_0^2 + \xi^2  & = 1,\, &\xi_0 &\geq 0\} \\
  \Gamma_2& = \{ & (0, -1, \xi_0, \xi)& \mid\, &  \xi_0^2 + \xi^2 & = 1,\,& \xi_0 & \geq 0\} \\
  \Gamma_3& = \{ & (x_0, x , 0, 1)    & \mid\, &  x_0^2 + x^2     & = 1,\,& x_0   & \geq 0\} \\
  \Gamma_4& = \{ & (x_0, x , 0, -1)   & \mid\, &  x_0^2 + x^2     & = 1,\,& x_0   & \geq 0\}.
  \end{alignat*}
Further, the Fredholm index of $\TT$ is precisely the winding number of $A$ as $\Gamma$ is transversed in the counter-clockwise direction, that is 
 \[
    \begin{tikzcd}
      (0,-1,0,1) \arrow{d}{\Gamma_2} & (0,1,0,1) \arrow{l}{\Gamma_3} \\%
      (0, -1,0, -1) \arrow{r}{\Gamma_4}& (0, 1, 0, -1). \arrow{u}{\Gamma_1}
    \end{tikzcd}
  \] 
As such, it is important to locate the roots of the function $1-c_0\ell$ when the parameters $\tau,c_0$ correspond to crossing the bifurcation
curves $C_1$ and $C_2$.  This  motivates the following Lemma.
%

\begin{lem}\label{zeros} 
The multiplier $\ell \colon \CC \to \CC$ is analytic in the complex strip $|{\IIm} \, z| < \eta^*$, with $\eta_*$ as in Proposition \ref{K-prop}. 
Moreover, there exists a possibly smaller strip $|{\IIm}\, z| <\eeta$ in which the function $1-c_0\ell:\CC \to \CC$ has precisely the zeros 
	 \begin{enumerate}[(i)]
   \item $k_{0}, -k_{0}, 0,$ and $0$ counting multiplicities for some $k_0>0$, when $\tau_0$ and $c_0$ are as in~\eqref{parameter-r1},
	\item $s, s, -s$ and $-s$ counting multiplicities for some $s>0$, when $\tau$ and $c_0$ are as in~\eqref{parameter-r2}.
	\end{enumerate}
See Figure~\ref{fig-symbol}.
\end{lem}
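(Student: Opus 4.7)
The plan is first to establish analyticity of $\ell$ in a strip $|\IIm z|<\eta^*$ via analysis of the squared symbol $m_\tau^2=(1+\tau z^2)\tanh(z)/z$, then to recast the zero-finding problem for $1-c_0\ell$ in terms of zeros of the entire function $G(z)=(\alpha+\beta z^2)\sinh z - z\cosh z$, explicitly locate the real zeros along $C_2$ and $C_3$, and finally use the decay of $\ell$ at horizontal infinity to rule out further zeros in some sufficiently narrow horizontal strip.

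For analyticity, I observe that $m_\tau^2$ is meromorphic with poles only where $\cosh z = 0$ (first at $\IIm z = \pm\pi/2$), and vanishes only at $z=\pm i/\sqrt\tau$ (with $1/\sqrt\tau > \sqrt{3} > \pi/2$ since $\tau<1/3$) or at $z=\pm i\pi n$ with $n\neq 0$. Hence $m_\tau^2$ is analytic and nowhere vanishing on the simply connected strip $|\IIm z|<\eta^* = \pi/2$, and the positive branch of its inverse square root defines $\ell$ as an analytic, nonvanishing function there. Writing $1 - c_0\ell = (m_\tau - c_0)/m_\tau$ with $m_\tau+c_0\neq 0$ in the strip, the zeros of $1-c_0\ell$ (with multiplicity) coincide with those of $F(z)=(1+\tau z^2)\sinh z - c_0^2 z\cosh z = c_0^2 G(z)$ after cancelling the simple zero of $z\cosh z$ at $z=0$. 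Factoring further $G(z)=\sinh(z)\,\psi(z)$ with $\psi(z)=\alpha+\beta z^2-z\coth z$ (removable value $\psi(0)=\alpha-1$), the task reduces to locating zeros of $\psi$ on the real line.

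In case (i), with $\alpha=1$ and $\beta=\tau<1/3$, a Taylor expansion gives $\psi(z)=(\beta-1/3)z^2+O(z^4)$, a double zero at the origin that propagates to a triple zero of $G$ and thus a double zero of $1-c_0\ell$. For nonzero real zeros I would recast $\psi=0$ as $\beta = \Psi(z):=(z\coth z - 1)/z^2$; the real-analytic function $\Psi$ is strictly decreasing from $1/3$ at $0^+$ to $0$ at $+\infty$, yielding a unique simple positive root $k_0$ and its reflection $-k_0$ by evenness. In case (ii), the defining parametrization of $C_2$ amounts precisely to the simultaneous equations $\psi(s)=\psi'(s)=0$, as a direct computation against the stated formulas for $\alpha$ and $\beta$ confirms, so $\pm s$ are each double real zeros of $\psi$ and hence of $1-c_0\ell$; here $\alpha>1$ for $s>0$ gives $\psi(0)>0$, so the origin is not a zero, while $\psi(z)\to +\infty$ at infinity together with a convexity/monotonicity argument on $\psi$ rules out further positive real zeros.

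To complete the proof I would use the uniform decay $\ell(z) = O(|z|^{-1/2})$ in any substrip $|\IIm z|\le \eta^*-\varepsilon$ to force $|1-c_0\ell|\geq 1/2$ outside a bounded vertical region, leaving $1-c_0\ell$ with only finitely many zeros in that substrip. Having enumerated all real zeros, it then suffices to choose $\tilde\eta$ strictly smaller than $|\IIm z_0|$ for every (necessarily finite collection of) non-real zero $z_0$, or simply $\tilde\eta = \eta^*-\varepsilon$ if none exists. The main obstacle is verifying the monotonicity of $\Psi$ in case (i) and the uniqueness of the positive real zero $s$ in case (ii); both are classical features of the gravity-capillary dispersion relation and will be shown by careful manipulation of hyperbolic identities.
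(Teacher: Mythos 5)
Your outline is essentially correct, but it takes a genuinely different route from the paper: the paper disposes of this lemma purely by citation --- analyticity of $\ell$ from \cite[Corollary 2.2]{EJMR}, item (i) from \cite[Lemma 2]{AK}, and item (ii) from Section IV of \cite{Kirchgassner} --- whereas you rederive the dispersion analysis from scratch, via $1-c_0\ell=(m_\tau-c_0)/m_\tau$, the reduction to the entire function $G(z)=(\alpha+\beta z^2)\sinh z-z\cosh z=\sinh(z)\,\psi(z)$, and a study of $\psi$ on the real line, followed by a decay-and-finiteness argument to select $\eeta$. The self-contained route buys transparency, and your final step (only finitely many zeros in a substrip, so one can shrink below the imaginary parts of the non-real ones) is precisely what justifies the phrase ``possibly smaller strip'', which the citations do not state in this form; the paper's route buys brevity and delegates exactly the hyperbolic-function facts you postpone.

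Three points to tighten. First, $m_\tau+c_0\neq 0$ off the real axis is not obvious, but it is also not needed: you only use the factorization on $\R$, where $m_\tau>0$, and your strip-shrinking handles all non-real zeros of $1-c_0\ell$ directly. Second, in case (i) strict decrease of $\Psi(z)=(z\coth z-1)/z^2$ does not by itself give simplicity of $\pm k_0$; you need $\Psi'(k_0)\neq 0$, equivalently $\ell'(k_0)\neq 0$, which the lemma's ``counting multiplicities'' requires and which the paper uses later in the normal form. Third, in case (ii) the ``convexity/monotonicity argument'' is named but not given, and you must also verify $\psi''(\pm s)\neq 0$ (equivalently $\ell''(s)\neq 0$) so that the multiplicity is exactly two. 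All three items follow cleanly from the strict concavity of $y\mapsto\sqrt{y}\coth\sqrt{y}$ in the variable $y=z^2$ (case (i) is a transversal chord--slope crossing, case (ii) is tangency of a line to a strictly concave graph); that concavity computation is exactly the content of the results the paper cites, so until it is written out your proposal reproduces the reduction while still leaning on the same classical facts the paper outsources.
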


\begin{proof}See Corollary 2.2 in~\cite{EJMR} for the analyticity of $\ell$. Item (i) can be found as Lemma 2 in~\cite{AK} and item (ii) can be found in~Section IV in~\cite{Kirchgassner}.\end{proof}

\begin{figure}
	\begin{center}
		\includegraphics[scale=0.85]{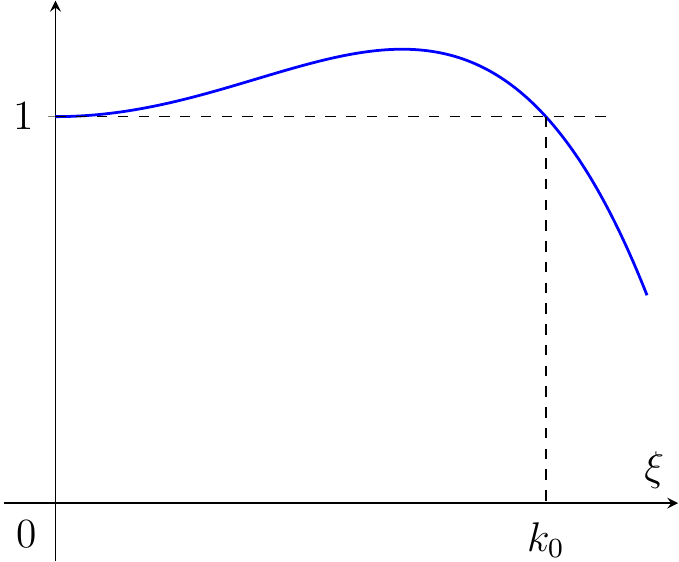} \hspace{0.7cm}
		\includegraphics[scale=0.85]{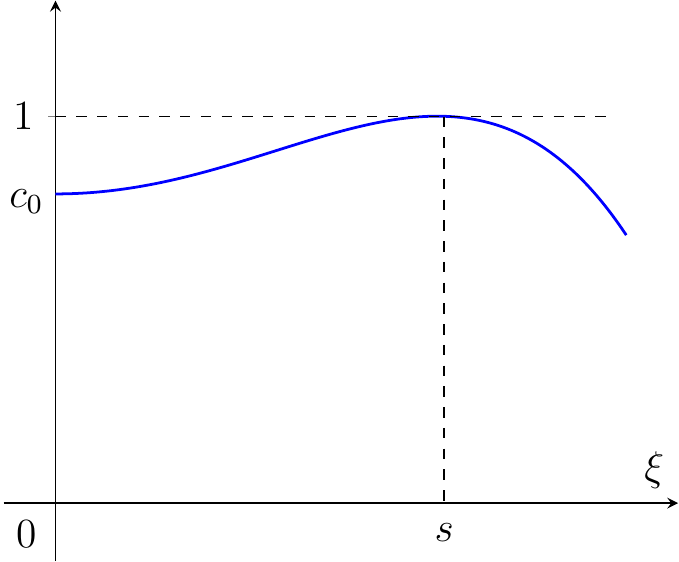}
		\captionsetup{width=.8\linewidth}
		\caption{The multiplier $c_0\ell$ for~\eqref{parameter-r1} (left) and for \eqref{parameter-r2} (right).}
		\label{fig-symbol}
	\end{center}
\end{figure}

With this preliminary result, we are now ready to prove the main result of this section.

\begin{thm} \label{Fredholm}
For each $\eta \in (0, \min\{\eta^*, \eeta\})$ and choice of parameters $\tau,c_0$ satisfying either \eqref{parameter-r1} or \eqref{parameter-r2}, 
the linear operator $\T:H^5_{-\eta} \to H^5_{-\eta}$ is Fredholm with Fredholm index four. 
For each set of parameters, its nullspace $\Ker \T$ is four-dimensional, given by
\begin{equation}\label{ker_1}
\Ker \T = \Span \{1, x, \cos(k_{0}  x), \sin(k_{0} x)\}
\end{equation}
if $\tau,c_0$ satisfy \eqref{parameter-r1}, and
\begin{equation}\label{ker_2}
\Ker \T = \Span \{\cos(s x), x \cos(sx), \sin(sx), x\sin(sx)\}
\end{equation}
if $\tau,c_0$ satisfy \eqref{parameter-r2}.
\end{thm}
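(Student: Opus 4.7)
The plan is to apply the Grushin-type Fredholm criterion summarized in Appendix \ref{Fredholm-appendix} to the conjugated operator $\TT = M^{-1}\circ\T\circ M$, compute the winding number of its associated homogeneous symbol $A$ to identify the Fredholm index, and then exhibit a four-dimensional kernel, thereby pinning down both the Fredholm structure and the cokernel dimension. Since $M=\cosh(\eta\,\cdot\,)$ is an isomorphism $H^5\to H^5_{-\eta}$, Fredholmness and the index are preserved under conjugation, so it suffices to work with $\TT$ on $H^5$.

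The first step is to verify the non-vanishing of $A$ on the boundary $\Gamma=\Gamma_1\cup\Gamma_2\cup\Gamma_3\cup\Gamma_4$. On $\Gamma_3$ and $\Gamma_4$ one has $\xi_0=0$, so $\xi/\xi_0\to\pm\infty$; the estimate $\ell(z)=\O(|z|^{-1/2})$ as $|\RRe z|\to\infty$ in the strip $|\IIm z|<\eta^*$ shows that $A\equiv 1$ there. On $\Gamma_1$, where $\phi_+\to 1$ and $\phi_-\to 0$, one finds $A|_{\Gamma_1}=1-c_0\ell(\xi/\xi_0-\i\eta)$; similarly $A|_{\Gamma_2}=1-c_0\ell(\xi/\xi_0+\i\eta)$. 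By Lemma \ref{zeros}, for $\eta\in(0,\eeta)$ the only zeros of $1-c_0\ell$ in the strip $|\IIm z|<\eeta$ lie on $\R$, and so the shifted arguments $\xi/\xi_0\mp\i\eta$ never hit a zero; hence $A$ is nowhere zero along $\Gamma$. Smoothness of $A$ on $\overline{\SS^1_+}\times\overline{\SS^1_+}$ follows from analyticity of $\ell$ in the strip combined with smoothness of $\phi_\pm$. Proposition \ref{Fredholm-Grushin} then guarantees $\TT$ is Fredholm.

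Next I compute the index as the winding number of $A$ around $0$ as $\Gamma$ is traversed counter-clockwise in the indicated order $\Gamma_2\to\Gamma_3\to\Gamma_1\to\Gamma_4$. Because $A\equiv 1$ on $\Gamma_3,\Gamma_4$, the total winding is the sum of the winding of $1-c_0\ell(s+\i\eta)$ and $1-c_0\ell(s-\i\eta)$ along the real line $s\in\R$, traversed in opposite senses. Shifting the contours toward the real axis by the argument principle, each real zero of $1-c_0\ell$ counted with multiplicity contributes $+1$ to the net winding on the union of the two shifted lines: for~\eqref{parameter-r1} the zeros are $\pm k_0$ (simple) and $0$ (double), giving winding number $4$, while for~\eqref{parameter-r2} the zeros $\pm s$ (each double) again sum to $4$. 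Hence $\ind\TT=4$ in both cases, and by conjugation the same holds for $\T$ on $H^5_{-\eta}$.

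Finally, I identify the kernel. Direct substitution, using $\hat K(0)=\ell(0)=1/c_0$ in case~\eqref{parameter-r1} and $\ell(\pm k_0)=1/c_0$, together with the evenness of $K$ to handle $x$, verifies that $1,x,\cos(k_0 x),\sin(k_0 x)\in \Ker\T$. In case~\eqref{parameter-r2} I use $\ell(\pm s)=1/c_0$ and the double-zero condition $\ell'(\pm s)=0$, exploiting the identity $K*(x\cos(sx))=\partial_s(K*\sin(sx))=\ell'(s)\sin(sx)+\ell(s)x\cos(sx)$, to check that the four listed trigonometric-polynomial functions also lie in $\Ker\T$. Each candidate is bounded, hence lies in $H^5_{-\eta}$ for any $\eta>0$. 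To show the kernel is exhausted, I apply the Fourier--Laplace transform on $L^2_{-\eta}$: any $\p\in\Ker\T$ gives $\hat\p$ an analytic distribution in the strip $|\IIm z|<\eta$ satisfying $(1-c_0\ell)\hat\p=0$, so $\hat\p$ is supported at the real zeros of $1-c_0\ell$ with order at most the multiplicity of the corresponding zero, yielding a space of dimension equal to the total multiplicity $4$. Thus $\dim\Ker\T=4$, matching $\ind\T=4$, and the claimed bases span $\Ker\T$.

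The main obstacle I anticipate is the careful bookkeeping in the winding-number computation on $\Gamma_1\cup\Gamma_2$, where the zeros of $1-c_0\ell$ lie exactly on the real axis while the contours sit on the lines $\IIm z=\mp\eta$; one has to verify that the sign and multiplicity contributions combine as claimed, and that the homotopy from the compact boundary $\Gamma$ to the doubled real line is valid through the smoothness and decay properties of $\ell$ in the strip.
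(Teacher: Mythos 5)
Your first two steps (Fredholmness via non-vanishing of $A$ on $\Gamma$, and index $=4$ via the winding of $1-c_0\ell$ along the two lines $\IIm z=\mp\eta$) are essentially the paper's own argument; the "careful bookkeeping" you flag at the end is handled there by applying the residue theorem on rectangles $\Gamma_R$ with vertices $\pm R\pm\i\eta$ and checking that the vertical sides contribute nothing as $R\to\infty$ because $\ell,\ell'\to 0$. Your direct substitution showing that the four listed functions do lie in $\Ker\T$ (using $\ell(k_0)=\ell(0)=1$ in case \eqref{parameter-r1}, and $\ell(s)=c_0^{-1}$, $\ell'(s)=0$ in case \eqref{parameter-r2}) is also correct.

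The genuine gap is in the exhaustion of the kernel. You argue that any $\p\in\Ker\T\subset L^2_{-\eta}$ has a Fourier--Laplace transform that is "an analytic distribution in the strip $|\IIm z|<\eta$" annihilated by $1-c_0\ell$, hence supported at the real zeros. This is not legitimate: elements of $L^2_{-\eta}$ may grow like $e^{\eta|x|}$, so their Fourier transforms are not tempered distributions at all, and the Fourier--Laplace transforms of the restrictions to $x>0$ and $x<0$ are analytic only in the half-planes $\IIm z<-\eta$ and $\IIm z>\eta$, i.e.\ \emph{outside} the strip, with no common domain on which the equation $(1-c_0\ell)\widehat\p=0$ can be imposed; turning this into a proof would require a genuine Wiener--Hopf-type matching argument, which you do not supply. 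This is precisely the obstruction the paper points out ("the Fourier transform of such $f$ is not a tempered distribution"), and it is circumvented there by duality: one studies instead the range of $\T$ acting on $L^2_{\eta}$, the adjoint of $\T|_{L^2_{-\eta}}$ under the $L^2$-pairing, where the Fourier transform is available since $L^2_\eta\subset L^2$; the range equation $(1-c_0\ell)\F f=\F g$ shows that $g$ lies in the range iff $\F g$ (and, at the double zeros, $(\F g)'$) vanishes at the real zeros of $1-c_0\ell$, and dualizing these four linear conditions yields exactly the spans \eqref{ker_1} and \eqref{ker_2}. Without this step your argument only gives a four-dimensional subspace of the kernel, and index $4$ alone does not rule out a larger kernel (with correspondingly nontrivial cokernel); note the exact kernel dimension together with index $4$ is also what delivers the surjectivity of $\T$ needed for the center manifold hypotheses, so the exhaustion cannot be waved through.
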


\begin{proof} 
Let $\eta \in (0, \min\{\eta^*, \eeta\})$ be fixed. Following the outline above, we first verify the Fredholmness of $\TT$ by showing that $A$ is non-vanishing on $\Gamma$.
To this end, recall that
\[
\ell(z) = \left(\frac{1}{1+\tau z^2} \,\frac{z}{\tanh(z)}\right)^{1/2}, \hspace{0.5cm} z \in \CC.
\]
Along $\Gamma_1$ we have $\xi_0=\sqrt{1-\xi^2}$ and hence for $(0,1,\xi_0,\xi)\in\Gamma_1$ with $\xi_0\neq 0$, i.e. $\xi\neq \pm 1$, we have, recalling \eqref{phi_limits},
\[
A(0,1,\xi_0,\xi) = \lim_{x_0\to 0^+}\tilde{t}\left(\frac{1}{x_0},\frac{\xi}{\sqrt{1-\xi^2}}\right) = 1- c_0\ell\left(\frac{\xi}{\sqrt{1-\xi^2}}- \i\eta\right).
\]
To evaluate at the end points $(0,1,0, 1)$ and $(0,1,0,-1)$, it is equivalent to compute the limit of $\ell(\xi'- \i\eta)$ as $\xi'\to\infty$ and $\xi'\to-\infty$, respectively.  
A calculation gives
\[
\big|\ell(\xi' \pm \i\eta)\big|^4 = \frac{4 (\xi')^2+ 4\eta^2}{(1+\tau(\xi')^2 \mp \tau \eta^2)^2 + (2\tau \xi' \eta)^2} \cdot \frac{(\cosh^2(\xi')-1+\cos^2(\eta))^2}{\cosh^2(2\xi')-\cos^2(2\eta)},
\]
which implies that $|\ell(\xi'\pm \i\eta)|\to 0$ as $|\xi'|\to\infty$.  Consequently, $A(0,1,0,\pm 1)= 1$.  By similar calculations, we find
%
%
%
\[A(x_0, x, \xi_0, \xi) = \begin{cases} 1- c_0\ell\left(\frac{\xi}{\sqrt{1-\xi^2}}- \i\eta\right), & \text{on $\Gamma_1\setminus\{(0,1,0,\pm 1)\}$}\\
1- c_0\ell\left(\frac{\xi}{\sqrt{1-\xi^2}} + \i\eta\right), & \text{on $\Gamma_2\setminus\{(0,-1,0,\pm 1)\}$}\\
1, & \text{on $\Gamma_3\cup\Gamma_4$.}
\end{cases}\]
In view of Lemma~\ref{zeros}, $A$ is smooth and nowhere vanishing on $\Gamma=\cup_{j=1}^4\Gamma_j$
and hence Proposition \ref{Fredholm-Grushin} implies that $\tilde{\T}$ is a Fredholm operator, as desired.
  
\begin{figure}[t]
\begin{center}
\includegraphics{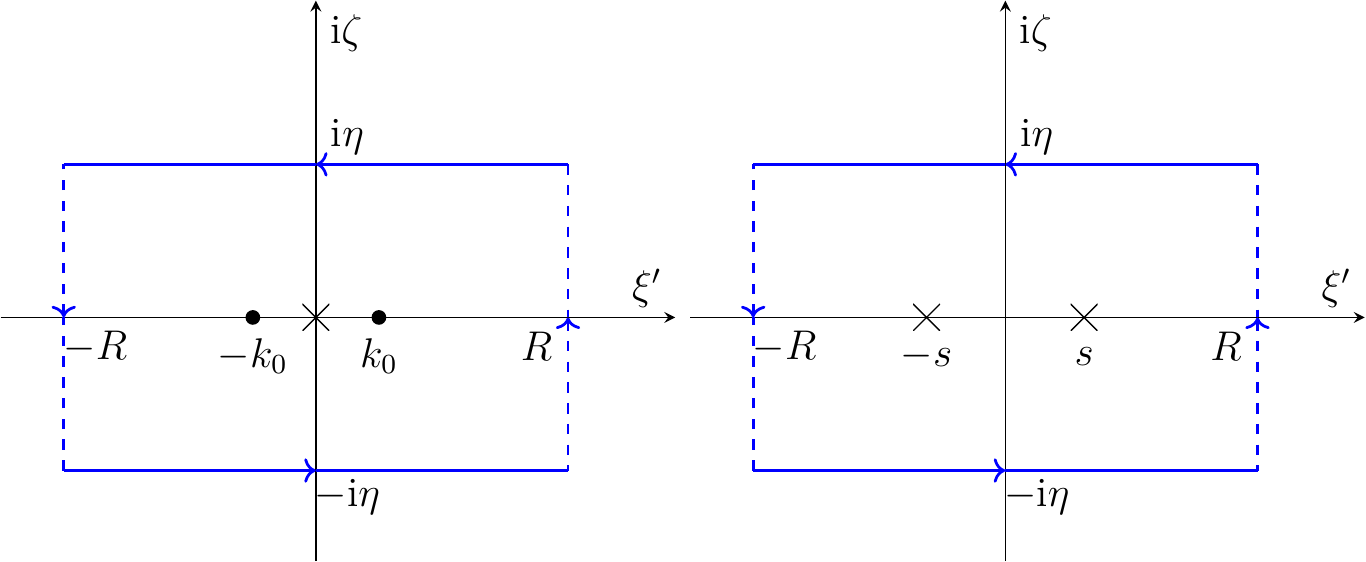}
\captionsetup{width=.8\linewidth}
\caption{The rectangular contour $\Gamma_R$ in Theorem~\ref{Fredholm}. It consists of the arcs $\{ \xi' \pm \i \eta \, : \, |\xi'| \leq R\}$ and $\{\pm R + \i \zeta \, : \,  |\zeta| \leq \eta\}$. The left picture illustrates the case~\eqref{parameter-r1} and the right illustrates~\eqref{parameter-r2}. Here, dots and crosses represent zeroes of $1-c_0\ell(\xi)$ with multiplicity one and two, respectively.}
\label{GammaR}
\end{center}
\end{figure}

Next, we compute the Fredholm index of the operator $\TT$ by computing the winding number of $A$ along $\Gamma$ transversed in the counter-clockwise direction (as described
above).   Setting $\xi'=\xi(1-\xi^2)^{-1/2}$ for $|\xi|<1$ we see that traversing from $(0,1,0,-1)$ to $(0,1,0,1)$ along $\Gamma_1$ corresponds to considering
$1-c_0\ell(\xi'-\i\eta)$ as $\xi'$ varies from $\xi'=-\infty$ to $\xi'=\infty$. while traversing from $(0,-1,0,1)$ to $(0,-1,0,-1)$ along $\Gamma_2$ corresponds
to considering $1-c_0\ell(\xi'+\i\eta)$ as $\xi'$ varies from $\xi'=\infty$ to $\xi'=-\infty$.  Further, since
$A$ is constant along $\Gamma_{3}$ and $\Gamma_4$, traversing along these arcs does not contribute to the winding number of $A$ along $\Gamma$.
To compute the winding number, choose $\tau,c_0$ to satisfy either \eqref{parameter-r1} or \eqref{parameter-r2}.  Let $R>0$ be strictly larger than the corresponding
values $k_0>0$ or $s>0$ from Lemma \ref{zeros}, and consider the rectangular contour $\Gamma_R$ with vertices at $(\pm R, \pm \i \eta)$: see Figure~\ref{GammaR}. 
The winding number of $A$ along $\Gamma$ is in fact limit of the winding number of $1-c_0\ell$ along $\Gamma_R$ as $R \to \infty$.  The latter is computed via
\[
\begin{split}\frac{1}{2\pi \i} \int_{\Gamma_R} \frac{c_0\ell'(z)}{1-c_0\ell(z)} \dif z &= \frac{1}{2\pi \i} \Big(-\int_{-R}^R \frac{c_0\ell'(\xi'+ \i\eta)}{1-c_0\ell(\xi' + \i\eta)} \dif \xi' + \int_{-R}^R \frac{c_0\ell'(\xi'- \i\eta)}{1-c_0\ell(\xi'-\i\eta)} \dif \xi'\\
&+ \int_{-\eta}^\eta \frac{c_0 \ell'(R+\i \zeta)}{1-c_0\ell(R+\i \zeta)} \dif \zeta - \int_{-\eta}^\eta \frac{c_0\ell'(-R + \i \zeta)}{1-c_0\ell(-R+\i\zeta)} \dif \zeta \Big).
\end{split}
\]
Since $1-c_0\ell$ is analytic, it follows by Lemma \ref{zeros} and the residue theorem that the above integrals sum to exactly four. Noting that the last two integrals vanish as $R \to \infty$ since $|\ell(\pm R + \i \zeta)| \to 0$ and $|\ell'(\pm R + \i \zeta)| \to 0$ as $R \to \infty$ and $|\zeta|\leq \eta$, the winding number of $A$ around $\Gamma$ is four. Proposition \ref{Fredholm-Grushin} now gives that the Fredholm index of $\TT$, and hence that of $\T$, is indeed four.

Finally, it remains to characterize the kernel of $\T$ acting on $H^5_{-\eta}$ when $\tau$ and $c_0$ satisfy either \eqref{parameter-r1} or \eqref{parameter-r2}.  Observe here that the equation $\T f=0$ with $f\in L^2_{-\eta}$ can not be studied directly by the Fourier transform since the Fourier transform of such $f$ is not a tempered distribution. To this end, we argue along the same lines as \cite[Proposition 2.9]{TWW} and consider instead the range of $\T \colon L^2_{\eta} \to L^2_{\eta}$, 
which is the adjoint of $\T \colon L^2_{-\eta} \to L^2_{-\eta}$ under the $L^2$-pairing. The Fourier transform of the range equation $\T f = g$ in $L^2_\eta$ is precisely 
\[
(1-c_0\ell(\xi))\F f(\xi) = \F g(\xi).
\]
In view of Lemma~\ref{zeros}, it follows that the range of $\T$ on $L^2_\eta$ consists of functions $g$ whose Fourier transforms
vanish on the zero set of $1-c_0\ell(\xi)$.  If $\tau,c_0$ satisfy \eqref{parameter-r1}, it follows from Lemma \ref{zeros} that the range
of $\T$ acting on $L^2_\eta$ consists of functions $g$ that satisfy
\[
\F g(0) = (\F g)'(0) = \F g(\pm k_0) = 0
\]
or, equivalently,
\[
\int_\R 1 \cdot g(x) \dif x = \int_\R x \cdot g(x) \dif x = \int_\R \exp(\mp \i k_0 x) g(x) \dif x = 0.
\]
By duality, such $\tau,c_0$ the kernel of $\T$ acting on $L^2_{-\eta}$ is given by \eqref{ker_1}, which clearly also belongs to $H^5_{-\eta}$.

Similarly, if $\tau,c_0$ satisfy \eqref{parameter-r2} then Lemma \ref{zeros} implies that the kernel of  $\T$ acting on $L^2_\eta$ consists of functions $g$ satisfying
\[
\F g(\pm s) = (\F g)'(\pm s) = 0
\]
or, equivalently,
\[
\int_\R \exp(\pm \i s x) g(x) \dif x = \int_\R x \cdot \exp(\pm \i s x) g(x) \dif x = 0.
\]
Again by duality, this shows that the kernel of $\T$ on $L^2_{-\eta}$ is given by \eqref{ker_2}, which again belongs to $H^5_{-\eta}$. 
\end{proof}

\section{Center manifold reduction}
\label{sect-CMT}
We use a nonlocal center manifold theorem, originally introduced in \cite{FS} and later adapted  in \cite{TWW} 
to account for the non-integrability of $K'$.  For completeness, the general result used here is recorded in Appendix~\ref{CMT-appendix}. 
In this section, we apply this general result to the nonlocal profile equation \eqref{eq} together with the modified equation
\begin{equation}\label{modified-gra-cap} 
\T \p + \N^\delta(\p, \mu) = 0,
\end{equation}
where
\[
\N^\delta(\p, \mu) = \N(\chi^\delta(\p), \mu)
\]
and $\chi^\delta:H^5_{-\eta} \to H^5_{\uu}$ is the nonlocal and  translationally invariant cutoff operator defined in \eqref{cutoff} in Appendix \ref{CMT-appendix}.
In particular, $\chi^\delta$ maps $\p \in H^5_{-\eta}$ to a ball of radius $C\delta$ in $H^5_{\uu}$, the space of uniform locally $H^5$ functions, with norm
\[
\|\p\|_{H^5_{\uu}} = \sup_{t \in \R}\|\p(\, \cdot \, + t)\|_{H^5([0,1])}.
\] 
More precisely, there exists a constant $C>0$ such that 
\[
\chi^\delta(\p) = \left\{\begin{aligned}
											\p&~~{\rm if}~~\|\p\|_{H^5_{\uu}}\leq C\delta\\
											0&~~{\rm if}~~\|\p\|_{H^5_{\uu}}~~{\rm is~sufficiently~large}
									\end{aligned}\right.
\]
and hence for $\|\p\|_{H^5_{\uu}} \leq C \delta$ we have $\N^\delta(\p, \mu)= \N(\p, \mu)$. Then, such small solutions of \eqref{modified-gra-cap}
are also solutions of the original profile equation \eqref{eq}.  Furthermore, note that since $H^5_{\uu}$ is continuously embedded
in $H^5_{-\eta}$ for all $\eta>0$, the operator $\chi^\delta$ also serves as a cutoff in the $H^5_{-\eta}$ norm as well.  For more details, see Appendix~\ref{CMT-appendix}.  

A central ingredient of the center manifold reduction is the construction of a bounded projection $\Q:H^5_{-\eta}\to H^5_{-\eta}$ 
onto $\Ker \T$, which could be any bounded projection having a continuous extension to $H^4_{-\eta}$ and commuting with the inclusion map 
from $H^5_{-\eta}$ to $H^5_{-\eta'}$ for all $0 < \eta' < \eta$. 
Since the nonlocal profile equation \eqref{eq} is invariant under all spatial translations, a specific choice of $\Q$ simplifies the computations significantly. 
Indeed, from Theorem \ref{Fredholm} we know $\Ker \T$ has dimension four and hence, keeping generality for the moment, we may take
\[
\Ker \T = \Span\left\{e_1,e_2,e_3,e_4\right\}
\]
for appropriately chosen, linearly independent functions $e_j$. Follow the recommendation in \cite{FS}, we aim to choose a projection $\Q \colon H^5_{-\eta} \to \Ker \T$ 
\[
\Q \colon \p \mapsto A e_1 + B e_2 + C e_3 + D e_4,
\]
which relates the coefficients $A, B, C$ and $D$ to $\p(0), \p'(0), \p''(0)$ and $\p'''(0)$ via a transition matrix $\mathscr T$
\[
\mathscr T \colon (\p(0), \p'(0), \p''(0), \p'''(0)) \mapsto (A, B, C, D).
\]
Using that $\Q \p = \Q^2 \p$, a straightforward computation yields
\[
\mathscr T = \begin{pmatrix} 
		e_1(0) & e_2(0) & e_3(0) & e_4(0) \\ 
		e_1'(0) & e_2'(0) & e_3'(0) & e_4'(0) \\ 
		e_1''(0) & e_2''(0) & e_3''(0) & e_4''(0) \\ 
		e'''_1(0) & e'''_2(0) & e'''_3(0) & e'''_4(0) 
		\end{pmatrix}^{-1}.
\]
When the parameters $\tau,c_0$ satisfy \eqref{parameter-r1}, $\Ker \T = \Span\{1, x, \cos(k_0x), \sin(k_0 x)\}$ according to Theorem \ref{Fredholm} and
the transition matrix with respect to these basis functions is
\begin{equation} \label{transition-r1} 
 \mathscr T_1= \begin{pmatrix} 
		1 & 0 & k_0^{-2} & 0 \\ 
		0 & 1 & 0 & k_0^{-2} \\ 
		0 & 0 & -k_0^{-2}& 0 \\ 
		0 & 0 & 0 & -k_0^{-3} 
		\end{pmatrix},
\end{equation}
which gives the explicit choice
\begin{equation}\label{proj-r1} 
\begin{split}\Q_1\p(x) &= \left(\p(0) + k_{0}^{-2}\p''(0)\right) + \left(\p'(0) + k_{0}^{-2} \p'''(0) \right)x \\
	& \hspace{1cm} - k_{0}^{-2} \p''(0) \cos(k_{0}x) - k_{0}^{-3} \p'''(0) \sin(k_{0}x). 
\end{split} 
\end{equation}
Similarly, when $\tau,c_0$ satisfy \eqref{parameter-r2} we have $\Ker \T = \Span\{\cos(sx), x\cos(sx), \sin(sx), x \sin(sx)\}$, and the transition matrix
with respect to these basis functions is 
\begin{equation} \label{transition-r2}
\mathscr T_2 = \begin{pmatrix}
1 & 0 & 0 & 0 \\ 0 & -1/2 & 0 & -(2s^2)^{-1} \\ 0 & 3(2s)^{-1} & 0 & (2s^3)^{-1} \\ s/2 & 0 & (2s)^{-1} & 0  
\end{pmatrix},
\end{equation}
which gives the explicit choice
\begin{equation}\label{proj-r2}
\begin{split}
\Q_2\p(x)  &=  \p(0) \cos(sx) - \left(\frac{1}{2}\p'(0) + \frac{1}{2s^2}\p'''(0)\right) x\cos(sx)  \\
& \hspace{1cm}+\left(\frac{3}{2s}\p'(0) + \frac{1}{2s^3}\p'''(0)\right) \sin(sx) + \left(\frac{s}{2}\p(0) + \frac{1}{2s}\p''(0)\right) x\sin(sx).
\end{split}
\end{equation}
\begin{rem}\label{rem-reg-choice}Our analysis up until this point holds in any space $H^m_{-\eta}$ with $m\ge 1$ and the choice of space $H^5_{-\eta}$ is made here. The projections $\Q_1$ and $\Q_2$ are required to have a continuous extension to $H^{m-1}_{-\eta}$. Because these involve pointwise evaluation of $\p'''$, we need at least $m-1=4$ which explains the choice $H^5_{-\eta}$.
\end{rem}
Lastly, the shift operator $\p \mapsto \p(\, \cdot \, + t)$ will be denoted by $S_t$.  We are in position to apply the nonlocal center manifold theorem
to equation \eqref{eq}.
 
\begin{thm}\label{CMT} 
There exist a neighborhood $\V$ of $0 \in \R$, a cutoff radius $\delta > 0$, a weight $\eta < \min\{\eta^*, \eeta\}$ and a map
\[
\Psi\colon \R^4 \times \V  \to \Ker \Q \subset H^5_{-\eta}
\]
with the center manifold
\[
\mathscr{M}^\mu_0 =\big\{Ae_1+ Be_2+ C e_3 + D e_4 + \Psi(A, B, C, D, \mu) \, \big| \, (A, B, C, D) \in \R^4 \big\}
\]
as its graph for each $\mu \in \V$. Here, $\Ker \T = \Span\{e_j\}_{j=1}^4$ and functions $e_j$ are taken to be as in Theorem~\ref{Fredholm} for
the given choices of $\tau,c_0$. The following statements hold:
  \begin{enumerate}[(i)]
    \item \label{CMT-smooth}(smoothness) $\Psi$ is $\C^4 $;
    \item \label{CMT-tan}(tangency) $\Psi(0,0,0,0,0) = 0$ and ${\D}_{(A, B, C, D)}\Psi(0,0,0,0,0)=0;$
    \item \label{CMT-glob-red}(global reduction) $\Mm^\mu_0$ consists precisely of solutions $\p \in H^5_{-\eta}$ with parameter $\mu$ to the modified equation~\eqref{modified-gra-cap};
    \item \label{CMT-loc-red}(local reduction) any $\p$ solving~\eqref{eq} with parameter $\mu$ and $\|\p\|_{H^5_{\uu}} \lesssim \delta$ is contained in $\Mm_0^\mu$;
    \item \label{CMT-corr} (correspondence) $\p \in \Mm^\mu_0$ if and only if it solves the ODE
          \begin{equation}\label{reduced-full}\p''''(t) = g(\mathscr T(\p(t), \p'(t), \p''(t), \p'''(t)), \mu), \end{equation}
	where 
	\[\begin{split} &g(A, B, C, D, \mu) \\
	&= \frac{\dif^4}{\dif x^4} \Big(Ae_1(x)+Be_2(x)+Ce_3(x)+De_4(x)+\Psi(A,B, C, D,\mu)(x)\Big)\Big|_{x=0} 
	\end{split}\]
	and $\mathscr T$ is the transition matrix $\mathscr T_1$ in~\eqref{transition-r1}, $\mathscr T_2$ in~\eqref{transition-r2} for~\eqref{parameter-r1} and~\eqref{parameter-r2}, respectively;
    \item \label{CMT-reversibility}(reversibility) equation~\eqref{eq} possesses the translation symmetries $S_t$ and a reflection symmetry $R\p(x) \coloneqq \p(-x)$, meaning 
    \[\T S \p = S \T \p \hspace{0.5cm}\text{and} \hspace{0.5cm}\N(S \p, \mu) = S\N(\p, \mu)\] 
    if $S$ is $S_t$ or $R$. Equation~\eqref{eq} is thus reversible; so is the modified equation~\eqref{modified-gra-cap} and the reduced ODE~\eqref{reduced-full} in item~(\ref{CMT-corr}). 
  \end{enumerate}
\end{thm}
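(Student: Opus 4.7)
The plan is to apply the general nonlocal center manifold theorem stated in Appendix \ref{CMT-appendix}; once its hypotheses are verified for the pair $(\T,\N)$, items (\ref{CMT-smooth})--(\ref{CMT-loc-red}) follow as direct conclusions of that theorem, while items (\ref{CMT-corr}) and (\ref{CMT-reversibility}) need short, separate arguments. I would first check the structural hypotheses. Fredholmness of $\T:H^5_{-\eta}\to H^5_{-\eta}$ with index four and the explicit kernel description are already provided by Theorem \ref{Fredholm}, while Proposition \ref{K-prop} supplies the exponential decay and the membership $K\in L^1_\eta$ for $\eta<\eta^*$ that are needed for the resolvent estimates in the theorem. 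Smoothness of the nonlinearity $\N(\p,\mu)=K\ast(\p^2-\mu\p)$ as a map $H^5_{\uu}\times\R\to H^5_{-\eta}$ is immediate from the fact that $H^5_{\uu}$ is a Banach algebra and convolution with $K\in L^1_\eta$ is bounded into $H^5_{-\eta}$; the cutoff $\chi^\delta$ from Appendix \ref{CMT-appendix} then yields a modified nonlinearity $\N^\delta$ meeting the smallness and Lipschitz requirements of the general theorem.

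Next I would verify that the candidate projections $\Q_j$ meet the compatibility conditions imposed on $\Q$ in Appendix \ref{CMT-appendix}. Both $\Q_1$ and $\Q_2$ are explicit linear combinations of pointwise evaluations of $\p,\p',\p'',\p'''$ at $x=0$ composed with a fixed kernel basis; since the Sobolev embedding $H^5_{-\eta}\hookrightarrow C^4$ bounds these evaluations, each $\Q_j$ extends continuously to $H^4_{-\eta}$, and because the basis functions do not depend on $\eta$, each $\Q_j$ commutes with every inclusion $H^5_{-\eta}\hookrightarrow H^5_{-\eta'}$ for $\eta'<\eta$. The explicit matrix computation behind \eqref{transition-r1} and \eqref{transition-r2} confirms $\Q_j^2=\Q_j$ and range $\Ker\T$. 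This pointwise-evaluation requirement is precisely the source of the regularity choice $m=5$ discussed in Remark \ref{rem-reg-choice}. Granted all of the above, items (\ref{CMT-smooth})--(\ref{CMT-loc-red}) are read off directly from the appendix theorem.

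For the correspondence item (\ref{CMT-corr}), the key is translation invariance of the modified equation \eqref{modified-gra-cap}: if $\p\in\Mm_0^\mu$ then $S_t\p\in\Mm_0^\mu$ for every $t\in\R$. Writing the graph parametrization of $\Mm_0^\mu$ for $S_t\p$ as
\[
\p(x+t) = A(t)e_1(x)+B(t)e_2(x)+C(t)e_3(x)+D(t)e_4(x)+\Psi(A(t),B(t),C(t),D(t),\mu)(x),
\]
evaluating the first three $x$-derivatives at $x=0$, and inverting the resulting linear system identifies $(A(t),B(t),C(t),D(t))=\mathscr{T}(\p(t),\p'(t),\p''(t),\p'''(t))$, with $\mathscr{T}$ the relevant transition matrix from \eqref{transition-r1} or \eqref{transition-r2}. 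Taking a fourth $x$-derivative at $x=0$ then produces exactly the reduced ODE \eqref{reduced-full} in the variable $t$.

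For the reversibility item (\ref{CMT-reversibility}), the commutation relations are direct: convolution with any kernel commutes with translations, and because $K$ is even by Proposition \ref{K-prop} convolution also commutes with the reflection $R\p(x):=\p(-x)$, while the pointwise map $\p\mapsto\p^2-\mu\p$ commutes with both $S_t$ and $R$. These symmetries pass to the modified equation because the cutoff $\chi^\delta$ from Appendix \ref{CMT-appendix} is constructed to be translation-invariant and to commute with $R$, and hence by the uniqueness of $\Psi$ they also pass to the reduced ODE via the transition matrix $\mathscr{T}$. The main obstacle throughout this plan is careful bookkeeping — matching weights, spaces, and the compatibility conditions on $\Q_j$ against the precise statement of the nonlocal center manifold theorem in \cite{TWW} — rather than any new conceptual difficulty.
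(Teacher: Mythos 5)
Your proposal is correct and follows essentially the same route as the paper: verify the hypotheses of the abstract nonlocal center manifold theorem from Appendix \ref{CMT-appendix} (Fredholmness via Theorem \ref{Fredholm}, the nonlinearity hypothesis for the cut-off quadratic nonlinearity composed with convolution by $K$, and the symmetries using the evenness of $K$ and the invariance of $\chi^\delta$), read off items (i)--(iv) and (vi), and then obtain (v) by translation invariance of $\Mm_0^\mu$, writing $S_t\p$ in graph form and differentiating at $x=0$, with the transition matrix $\mathscr T$ converting between $(A,B,C,D)$ and $(\p,\p',\p'',\p''')$ exactly as in the paper's argument and Remark \ref{rem-CMT}.
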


\begin{proof} 
We use the nonlocal center manifold theorem from \cite{TWW}, which for completeness is stated in Theorem~\ref{abstract-CMT}.  
The Hypothesis~\ref{hyp-T} on the linear operator $\T$ has been verified in the previous section and, further, the Hypothesis \ref{hyp-N} for the nonlinearity 
$\p \mapsto \p^2$ in $H^m_{-\eta}$ for $m\ge 1$ was verified in\footnote{Technically, the hypothesis was verified in \cite{FS-corrig} in the case
$m=1$, and then later extended to $m\geq 1$ in \cite{TWW}.} \cite{FS-corrig,TWW}.  Using that convolution with $K$ is a bounded linear mapping 
on $H^5_{-\eta}$ it follows that Hypothesis \ref{hyp-N} holds for our nonlocal nonlinearity $\p \mapsto K*\p^2$ as well.
Note that the regularity $k \geq 2$ in Hypothesis~\ref{hyp-N} is arbitrary for $\N^\delta$ in~\eqref{modified-gra-cap}, 
possibly at the cost of a smaller cutoff radius $\delta$ and $\eta > 0$.   Forthcoming computations with the reduced ODEs motivate the choice of $k=4$, which we now take.
Finally, the symmetries in item (\ref{CMT-reversibility}) above are easily checked, using that equation~\eqref{eq} is steady, that the cutoff $\chi^\delta$ 
commutes with both $R$ and $S_t$, and that $K$ is an even function.  It follows that Theorem~\ref{abstract-CMT} applies directly to the present case,
giving statements (\ref{CMT-smooth})--(\ref{CMT-loc-red}) and (\ref{CMT-reversibility}). 

It remains to prove the claim in (\ref{CMT-corr}) above.  To this end, let $\p \in \Mm_0^\mu$ and note that, since Theorem~\ref{abstract-CMT}(\ref{CMT-reversibility}) 
implies that $\Mm_0^\mu$ is invariant under translation symmetries,
we have $S_t\p\in\Mm_0^\mu$ for all $t\in \R$.  Consequently, there exist functions $A(t)$, $B(t)$, $C(t)$ and $D(t)$ defined for all $t\in\R$ such that
\begin{equation}\label{trans_identity}
\begin{aligned}
S_t \p(x) = &A(t)e_1(x) + B(t)e_2(x) + C(t)e_3(x) + D(t) e_4(x) \\
& \quad+ \Psi(A(t), B(t), C(t), D(t), \mu)(x).
\end{aligned}
\end{equation}
for each $t\in \R$.  Noting that the left-hand side of~\eqref{reduced-full} 
can be rewritten as 
\[
\p''''(t) = \p''''(x + t) \Big|_{x=0} = \frac{\dif^4}{\dif x^4} S_t\p \Big|_{x=0},
\]
differentiating the identity \eqref{trans_identity} four times in $x$ and evaluating at $x=0$ yields precisely the right-hand side in \eqref{reduced-full} with $A(t)$, $B(t)$, $C(t)$ and $D(t)$.  Statement (\ref{CMT-corr}) is now proved by using the transition matrix $\mathscr T$ to rewrite \eqref{reduced-full} in terms of $\p(t)$, $\p'(t)$, $\p''(t)$ and $\p'''(t)$.

\end{proof}

\begin{rem}\label{rem-CMT}
	The above proof shows that the projection coefficients $A(t), B(t), C(t)$ and $D(t)$, defined by the shift action $S_t$ on $\p \in \Mm_0^\mu$, are in fact $H^5_{-\eta}$ functions in $t$ because 
	\[(A(t), B(t), C(t), D(t)) = \mathscr T(\p(t), \p'(t), \p''(t), \p'''(t)), \hspace{0.5cm}t \in \R.\]
	Here, $\mathscr T$ is the transition matrix $\mathscr T_1$ for~\eqref{parameter-r1} and $\mathscr T_2$ for~\eqref{parameter-r2}.  
\end{rem}

In the next sections, we will verify our main results Theorem \ref{thm-existence-GSW-intro} and Theorem \ref{thm-existence-r2-intro} by studying
the reduced ODE equations for \eqref{modified-gra-cap} for appropriate values of of the parameters $\tau$ and $c_0$.

\section{Existence of generalized solitary waves}
\label{sect-reduced-r1}

We now establish Theorem \ref{thm-existence-GSW-intro} by deriving and studying the reduced ODE for equation~\eqref{modified-gra-cap} for $\tau,c_0$ satisfying \eqref{parameter-r1}. 
First, we assume that $\p \in \Mm_0^\mu$ is so small in the $H^5_{\uu}$ norm that it is a solution of~\eqref{eq}.  
Expanding the reduced function $\Psi$ in $A, B, C, D$ and $\mu$, and then substituting into~\eqref{eq} gives the reduced ODE up to second-order 
terms. We observe from the linear part of the truncated ODE that we have a reversible $0^{2+}(\i k_0)$ bifurcation and then apply normal form theory for this bifurcation phenomenon. It turns out that equation~\eqref{reduced-full} at leading orders is almost identical to the reduced ODE for the two-dimensional gravity-capillary water wave equations in this parameter region. 
Theorem \ref{thm-existence-GSW-intro} is then established after a persistence argument.

\subsection{The reduced system}

Recall that Remark~\ref{rem-CMT} highlights how the projection coefficients $A, B, C$ and $D$ may be interpreted as differentiable functions and, further, 
Theorem~\ref{CMT}(\ref{CMT-corr}) suggests working with these rather than the $\p(t), \p'(t), \p''(t)$ and $\p'''(t)$ directly. Next, we Taylor expand the function $\Psi$ up to second-order terms to obtain the following truncated system of ODEs.

\begin{prop}\label{trunc-syst}
  Equation~\eqref{reduced-full} in terms of $A, B, C$ and $D$ is 
\begin{align} \label{reduced-syst}
\begin{dcases}
\begin{aligned}
\frac{\dif A}{\dif t} & = B \\
\frac{\dif B}{\dif t} & = \frac{1}{k_0^2}\Psi(A, B, C, D, \mu)''''(0)\\
\frac{\dif C}{\dif t} & = k_0 D\\
\frac{\dif D}{\dif t} &=  -k_0C - \frac{1}{k_0^3}\Psi(A, B, C, D, \mu)''''(0).
\end{aligned}
\end{dcases}
\end{align}
Moreover, with $\sigma = \ell''(0)^{-1} = 1/(1/3 - \tau_0)$, we have
\begin{equation}\label{psi} \begin{split} &\Psi(A, B, C, D,\mu)''''(0) = 2\sigma k_{0}^2 \mu A - \frac{2k_{0}^3}{\ell'(k_{0})} \mu C - 2\sigma k_{0}^2 A^2 + \frac{4k_{0}^3}{\ell'(k_{0})} AC \\
&\hspace{0.5cm} - \left(\frac{3\sigma^{-2} - 4\sigma^{-1} - 4/15}{3} \sigma^2 k_{0}^2 - 4\sigma \right)B^2  + \left( 2k_{0}^3\frac{\ell''(k_{0})-2\ell'(k_{0})^2}{\ell'(k_{0})^2} - \frac{10k_{0}^2}{\ell'(k_0)} \right) BD\\
&\hspace{0.5cm} + \left(\frac{8\ell(2k_{0})k_{0}^4}{\ell(2k_{0})-1} - \sigma k_{0}^2 \right)C^2 - \left(\frac{8\ell(2k_{0})k_{0}^4}{\ell(2k_{0})-1} + \sigma k_{0}^2 \right)D^2\\
&\hspace{0.5cm} + \O\left(|(A, B, C, D)| \left(\mu^2 + |A|^2+|B|^2+|C|^2 + |D|^2 \right) \right).
\end{split}\end{equation}
\end{prop}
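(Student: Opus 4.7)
The proof splits into two parts. For the ODE system \eqref{reduced-syst}, I unpack the change of coordinates in Theorem \ref{CMT}(\ref{CMT-corr}). Writing the graph representation
\[S_t\p(x) = A(t)e_1(x) + B(t)e_2(x) + C(t)e_3(x) + D(t)e_4(x) + \Psi(A(t), B(t), C(t), D(t), \mu)(x)\]
with $e_1=1,\, e_2=x,\, e_3=\cos(k_0 x),\, e_4=\sin(k_0 x)$, and noting that $\Psi \in \Ker \Q_1$ forces via \eqref{proj-r1} the identities $\Psi(0) = \Psi'(0) = \Psi''(0) = \Psi'''(0) = 0$, evaluation of $x$-derivatives of $S_t\p$ at $x=0$ yields $\p(t) = A+C$, $\p'(t) = B + k_0 D$, $\p''(t) = -k_0^2 C$, $\p'''(t) = -k_0^3 D$, and $\p''''(t) = k_0^4 C + \Psi''''(0)$. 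Differentiating $\mathscr T_1(\p(t), \p'(t), \p''(t), \p'''(t))^\top = (A(t), B(t), C(t), D(t))^\top$ in $t$ and substituting these expressions produces \eqref{reduced-syst} immediately.

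For the Taylor expansion \eqref{psi}, the plan is to expand $\Psi$ in a formal power series in $(A, B, C, D, \mu)$ with $x$-dependent coefficients $\Psi_\alpha \in \Ker \Q_1$ and then match orders in the invariance equation. Substituting $\p = \phi_0 + \Psi$ with $\phi_0 = Ae_1 + Be_2 + Ce_3 + De_4$ into \eqref{eq}, and using both $\T\phi_0 = 0$ and the identity $K * \phi_0 = \phi_0$ (which follow from $\ell(0) = \ell(k_0) = 1$ and $\ell'(0) = 0$), the invariance equation becomes
\[\T\Psi = -K*(\phi_0+\Psi)^2 + \mu(\phi_0 + K*\Psi),\]
which at second order in $(A, B, C, D, \mu)$ reduces to $\T \Psi = -K * \phi_0^2 + \mu \phi_0$. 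Expanding $\phi_0^2$ via standard trigonometric identities and computing $K*$ on each resulting term with the closed-form multiplier identity
\[K*(x^n e^{i\lambda x})(x) = e^{i\lambda x} \sum_{j=0}^n \binom{n}{j}(-i)^{n-j}\ell^{(n-j)}(\lambda)x^j,\]
produces an equation $\T\Psi_\alpha = f_\alpha$ for each quadratic monomial $\alpha$ in $(A, B, C, D, \mu)$, where $f_\alpha$ is an explicit polynomial-exponential in $x$ involving $\ell(0), \ell'(0), \ell''(0), \ell(k_0), \ell'(k_0), \ell''(k_0)$ and $\ell(2k_0)$.

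To invert $\T$ on each $f_\alpha$, I exploit the surjectivity of $\T$ on $H^5_{-\eta}$ guaranteed by Theorem \ref{Fredholm}. For each $f_\alpha$ I construct a particular solution $\Psi_\alpha^p$ of polynomial-exponential form, matching coefficients with the same multiplier identity. When $f_\alpha$ has a nontrivial projection onto $\Ker \T = \Span\{1, x, \cos(k_0 x), \sin(k_0 x)\}$, the polynomial degree of the natural ansatz must be increased by the multiplicity of the corresponding root of $1 - \ell$, leading to ans\"atze such as $x^2$, $x\cos(k_0 x)$, $x\sin(k_0 x)$, $x^2\cos(k_0 x)$ or $x^2\sin(k_0 x)$, whose images under $\T$ supply precisely the required resonant components. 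The residual ambiguity modulo $\Ker \T$ is then fixed uniquely by imposing the four conditions $\Psi_\alpha(0) = \Psi_\alpha'(0) = \Psi_\alpha''(0) = \Psi_\alpha'''(0) = 0$, and reading off $\Psi_\alpha''''(0)$ for each monomial gives \eqref{psi}; the identification $\sigma = \ell''(0)^{-1} = (1/3-\tau_0)^{-1}$ follows from Taylor-expanding $\ell$ near $\xi = 0$. The main obstacle is computational rather than conceptual: bookkeeping across the eight nonzero monomial coefficients, and in particular the treatment of the resonant cases, requires care. A useful consistency check is provided by the reversibility symmetry $R$ of Theorem \ref{CMT}(\ref{CMT-reversibility}), which induces $(A, B, C, D) \mapsto (A, -B, C, -D)$; invariance of the fourth derivative at the origin under $x \mapsto -x$ forces the coefficients of $AB$, $AD$, $BC$, $CD$, $\mu B$ and $\mu D$ in $\Psi''''(0)$ to vanish, consistent with \eqref{psi}.
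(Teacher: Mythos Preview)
Your proposal is correct and follows essentially the same approach as the paper. Both derivations obtain \eqref{reduced-syst} by passing between $(\p,\p',\p'',\p''')$ and $(A,B,C,D)$ via $\mathscr T_1$ and using that $\Psi\in\Ker\Q_1$ forces $\Psi(0)=\cdots=\Psi'''(0)=0$; and both obtain \eqref{psi} by substituting $\p=\phi_0+\Psi$ into the profile equation, matching second-order monomials to get linear equations $\T\Psi_\alpha=f_\alpha$, solving these via the polynomial-exponential multiplier identity, and fixing the $\Ker\T$ ambiguity by the projection constraint. Your formulation $\T\Psi=-K*\phi_0^2+\mu\phi_0$ (using $K*\phi_0=\phi_0$ since $c_0=1$) is the same as the paper's $\T\Psi=-(\Id-\T)(\Q_1\p)^2+\mu\Q_1\p$, and your use of reversibility to kill the odd monomials mirrors the paper's parity argument.
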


\begin{proof} Deriving equation~\eqref{reduced-syst} from~\eqref{reduced-full} using the transition matrix $\mathscr T_1$ is straightforward.  Indeed, simply note
that \eqref{reduced-full} is equivalent to 
\[
\frac{\dif}{\dif t}\left(\begin{array}{c} A \\ B \\ C \\ D\end{array}\right) = 
	\mathscr T_1 \left(\begin{array}{cccc}0 & 1 & 0 & 0\\ 0 & 0 & 1 & 0\\ 0 & 0 & 0 & 1\\0 & 0 & 0 &0\end{array}\right)\mathscr T_1^{-1}\left(\begin{array}{c} A \\ B \\ C \\ D\end{array}\right)
	+ \left(\begin{array}{c} 0 \\ 0 \\0 \\ g(A, B, C, D,\mu)\end{array}\right).
\]
Noting that, in this case, 
\[
g(A,B,C,D,\mu) = k_0^4C + \Psi(A,B,C,D,\mu)''''(0),
\]
a direct calculation shows that the above is precisely \eqref{reduced-syst}.  

It remains to compute the asymptotic expansion \eqref{psi}.
Specifically, we focus on computing the function $\Psi(A, B, C, D, \mu)''''$ 
evaluated at $x=0$ up to order two in $A, B, C, D$ and $\mu$. According to Theorem~\ref{CMT}(\ref{CMT-smooth}), $\Psi$ is $\C^4$ in 
$(A, B, C, D,\mu)$.   Together with item (\ref{CMT-tan}) in Theorem~\ref{CMT} and the fact that $\p \equiv 0$ is a solution to~\eqref{eq} 
for all $\mu \in \R$, it follows that the Taylor expansion of $\Psi$ must be of the form
\[
\begin{split}\Psi(A, B, C, D, \mu)(x) &= \sum_{ \substack{2\leq p+q+l+m+n \leq 3 \\ n \ge 1}} \Psi_{pqlmn}(x) \cdot  A^pB^qC^lD^m \mu^n+\cdots \end{split}
\]
where each $\Psi_{pqlmn} \colon \R \to \R$ belongs to $\Ker \Q_1 \subset H^5_{-\eta}$. It thus remains to compute
$\Psi_{pqlmn}''''(0)$ for $p+q+l+m+n=2$ and $n \ge 1$.
To this end, let $\p \in \Mm_0^\mu$ and $\|\p\|_{H^5_{\uu}} \lesssim \delta$ and note by Theorem~\ref{CMT}(\ref{CMT-loc-red}) that $\p$ solves~\eqref{eq}. 
To conveniently group the terms, we rewrite the left-hand side of equation~\eqref{eq} to have
\[
\T \p + (\Id - \T)(\p^2 - \mu \p) = 0.
\]
Since $\p$ belongs to $\Mm_0^\mu$, we know that $\p(x) = \Q_1\p(x) + \Psi(A,B,C, D,\mu)(x)$, and plugging this into the above equation gives
\[
\T (\Q_1\p + \Psi) + (\Id - \T)\Big((\Q_1 \p + \Psi)^2 - \mu (\Q_1 \p + \Psi)\Big) = 0.
\]
Using that $\T \Q_1 \p = 0$ by definition, the above can be rearranged as
\[
\T\Psi + (\Q_1\p)^2 - \mu \Q_1 \p - \T (\Q_1 \p)^2 =-\left(\Id-\T\right)\left(2\left(\Q_1\varphi\right)\Psi + \Psi^2 - \mu\Psi\right),
\]
where we note the right hand side above consists of all terms that are at least cubic in $(A,B,C,D,\mu)$. Linear equations for the functions $\Psi_{pqlmn}$ can now be read off easily, and are recorded in  Appendix~\ref{center-mani-coeff-r1}. Note that by the condition $\Q_1 \Psi_{pqlmn} = 0$, these coefficient functions $\Psi_{pqlmn}$ are uniquely
determined.  Indeed, as seen in Appendix \ref{center-mani-coeff-r1}, if there are two solutions $\Psi_{pqlmn}$ and $\tilde\Psi_{pqlmn}$, then their difference must belong to $\Ker\T\cap\Ker\Q_1$, and hence must be zero.  Further, we observe that symmetries can be used to greatly simplify the necessary computations. 
Indeed, note that the basis functions $1, x, \cos(k_0x)$ and $\sin(k_0x)$ are either even or odd and that the operators $\T$, $\Id-\T$ and $\Q_1$ map even to even and odd to odd functions. 
Consequently, as seen in Appendix \ref{center-mani-coeff-r1} the linear equations for $\Psi_{pqlmn}$ involve either only even or odd functions
and hence the solutions $\Psi_{pqlmn}$ are also necessarily  either even or odd functions. Since only even functions $\Psi_{pqlmn}$ contribute to $\Psi(A, B, C, D, \mu)''''$ evaluated at $0$, equations for odd $\Psi_{pqlmn}$ may be disregarded. The computations for $\Psi_{pqlmn}$ are detailed in Appendix~\ref{center-mani-coeff-r1} and these give equation~\eqref{psi}.
\end{proof}

\subsection{Normal form reduction}
\label{normal-form-r1}

We use normal form theory to study the reduced system \eqref{reduced-syst}, which can be written as
\begin{equation}\label{diff-eq-r1}
\frac{\dif U}{\dif t} = \Ll U + \Rr(U, \mu),
\end{equation}
where $U = (A, B, C, D)$, and $\Ll$ is precisely the linearization of \eqref{reduced-syst} about the origin, that is,
\[
\Ll = \begin{pmatrix}0 & 1 & 0 & 0 \\ 0 & 0 & 0 & 0 \\ 0 & 0 & 0& k_0\\ 0 & 0 & -k_0 & 0\end{pmatrix},
\]
and $\Rr$ is $\C^4$ in a neighborhood of $(0,0) \in \R^4 \times \R$, satisfying $\Rr(0,0) = 0$ and ${\D}_U\Rr(0,0) = 0$.  The spectrum of $\Ll$ consists of the algebraically double and geometrically simple eigenvalue $0$, as well as the pair of simple purely imaginary
eigenvalues $\pm \i k_0$.  Further, we note that the reflection symmetry $R\p(x) = \p(-x)$ on $\Ker \T$ with respect to the basis functions $1, x, \cos(k_0 x)$ and $\sin(k_0x)$ is
\[
A + B(-x) + C \cos(-k_0x) + D\sin(-k_0x) = A - Bx + C \cos(k_0 x) - D \sin(k_0 x).
\]
This shows that $R$ restricted to $\Ker \T$ is a linear mapping on $\R^4$, given by 
\[
R \colon (A, B, C, D) \mapsto (A, -B, C, -D)
\]
and, clearly, $R^2 = \Id$.  Further, by Theorem~\ref{CMT}(\ref{CMT-reversibility}), $R$ anticommutes with $\Ll$ and $\Rr$, that is, $R\Ll U = -\Ll RU$ and $R\Rr(U, \mu) = -\Rr(RU, \mu)$. 
Taken together, it follows that it is natural to expect that the origin undergoes a reversible $0^{2+}(\i k_0)$ bifurcation for parameters $\mu$ sufficiently small.  In this section,
we use the correspoding normal form theory for such bifurcations from \cite[Chapter 4.3.1]{HI} to study \eqref{diff-eq-r1} near the origin for $\mu$ sufficiently small.

To begin, we note that the eigenvectors and generalized eigenvectors of $\Ll$ are given by
\[
\xi_0 = \begin{pmatrix}1 \\ 0 \\ 0 \\ 0 \end{pmatrix}, \, \, \xi_1 = \begin{pmatrix} 0 \\ 1 \\ 0 \\ 0 \end{pmatrix}, \, \, \text{and} \, \,\zeta = \begin{pmatrix} 0 \\ 0 \\ 1 \\ \i \end{pmatrix}, 
\]
which are readily seen to satisfy
\begin{equation}\label{L-R-basis} 
\begin{aligned}
\Ll\xi_0 &= 0, & \Ll\xi_1&=\xi_0, & \Ll\zeta &= \i k_0 \zeta,\\
 R \xi_0 &= \xi_0, & R \xi_1 &= -\xi_1, & R\zeta &= \overline{\zeta}.
\end{aligned}
\end{equation}
Based on the structure of ${\bf L}$, throughout the remainder of this section $\R^4$ will be identified with $\R^2 \times \widetilde{\R^2}$ 
where $\widetilde{\R^2} \coloneqq \{(C, \overline{C}) \, \colon \, C \in \CC\}$. We are now in the position to directly apply the normal form
result \cite[Lemma 3.5]{HI}.  This result implies that there exist neighborhoods $\mathcal{V}_1$ and $\mathcal{V}_2$ 
of $0 \in \R^2 \times \widetilde{\R^2}$ and $0 \in \R$, respectively, and a polynomial change of variables 
\begin{equation}\label{change_of_var1}
U = \Aa \xi_0 + \Bb \xi_1 + \cC \zeta + \Cc \overline{\zeta} + \Phi(\Aa, \Bb, \cC, \Cc, \mu)
\end{equation}
defined in $\mathcal{V}_1$ and $\mathcal{V}_2$, which transforms the reduced system \eqref{reduced-syst} into the normal form 
\begin{equation}\label{normal-form-r1-general}
\begin{dcases}
\frac{\dif \Aa}{\dif t} = \Bb \\
\frac{\dif \Bb}{\dif t} = P(\Aa, |\cC|^2, \mu) + \rho_{\Bb}(\Aa, \Bb, \cC, \Cc, \mu) \\
\frac{\dif \cC}{\dif t} = \i k_0 \cC + \i \cC Q(\Aa, |\cC|^2, \mu) + \rho_{\cC}(\Aa, \Bb, \cC, \Cc, \mu),
\end{dcases}
\end{equation}
where $P$ and $Q$ are polynomials of degree two and one in $(\Aa, \Bb, \cC, \Cc)$, respectively. Here, the function $\Phi$ is $\C^4$, satisfying 
\[
\begin{split}\Phi(0,0&, 0,0,0) = 0, \hspace{0.5cm} \partial_{(\Aa, \Bb, \cC, \Cc)}\Phi(0,0,0,0,0) = 0\\
&\Phi(\Aa, -\Bb, \Cc, \cC, \mu) = R\Phi(\Aa, \Bb, \cC, \Cc, \mu)
\end{split}
\] 
while the remainders $\rho_{\Bb}$ and $\rho_{\cC}$ are $\C^4$ with
\[
|\rho_{\Bb}(\Aa, \Bb, \cC, \Cc, \mu)| + |\rho_{\cC}(\Aa, \Bb, \cC, \Cc, \mu)| = o((|\Aa|+|\Bb|+|\cC|)^2).
\]
For proof and more details, see \cite[Chapter 4.3.1]{HI}. 

Let
\[
\begin{split}
&P(\Aa, |\cC|^2, \mu) = p_0\mu + p_1\mu {\Aa} + p_2\Aa^2 + p_3|\cC|^2 \\
&Q(\Aa, |\cC|^2, \mu) = q_0 \mu + q_1 \Aa.
\end{split}
\]
The scalar coefficients $p_0, p_1, p_2, p_3, q_0$ and $q_1$ are computed in Appendix~\ref{appendix-normal-form-r1}. Setting $\sigma = (1/3-\tau)^{-1}$
these calculations yield the normal form of~\eqref{reduced-syst} as
\begin{equation}
\label{normal-form-r1-final}
\begin{dcases}
\frac{\dif \Aa}{\dif t} = \Bb \\
\frac{\dif \Bb}{\dif t} = 2\sigma\mu \Aa - 2\sigma \Aa^2 - 4\sigma|\cC|^2 + \O(|\mu|^2 + (|\mu| + |\Aa| + |\cC|)^2))\\
\frac{\dif \cC}{\dif t} = \i k_0 \cC + \frac{\i}{\ell'(k_0)} \mu \cC - \frac{2\i}{\ell'(k_0)} \Aa \cC + \O(|\cC|(|\mu|+|\Aa| + |\cC|^2)^2).
\end{dcases}
\end{equation}

\subsection{Generalized solitary waves}
\label{sol-existence-r1}

Next, we consider the normal form system \eqref{normal-form-r1-final} truncated at second-order terms, i.e.
\begin{equation}\label{normal-form-r1-trunc}
\begin{dcases}
\frac{\dif \Aa}{\dif t} = \Bb \\
\frac{\dif \Bb}{\dif t} = 2\sigma \mu \Aa - 2\sigma \Aa^2 - 4\sigma |\cC|^2 \\
\frac{\dif \cC}{\dif t} = \i k_0 \cC - \frac{\i}{\ell'(k_0)} \mu \cC + \frac{2\i}{\ell'(k_0)}\Aa \cC.
\end{dcases}
\end{equation}
The change of variables
\begin{equation}\label{rescalings1}
\begin{split}
&t= \frac{1}{\sqrt{2}}w, \hspace{0.5cm} \Aa(t) = -\frac{3}{2}\widetilde{\Aa}(w), \hspace{0.5cm}\Bb(t) = -\frac{3}{\sqrt{2}}\widetilde{\Bb}(w), \\
&\hspace{2cm}\cC(t) = |\mu|k^{1/2} \exp(\i \Theta(t))
\end{split}
\end{equation}
transforms~\eqref{normal-form-r1-trunc} into the system (3.14) studied by Iooss \& Kirchg\"{a}ssner in~\cite{IK} in their search for generalized solitary waves in the context
of the full gravity-capillary water wave problem.  The only difference between our rescaled system and that studied in \cite{IK} is the coefficients of terms involving $\cC$, which is inconsequential.  Note in \cite{IK} that the small parameter used is $\frac{1}{c^2}-1$, which corresponds to $-\mu$ in our case.  Here, $k$ is fixed but arbitrary. We observe that 
\[\Theta'(t)= k_0 - \frac{\mu}{\ell'(k_0)}+\frac{2}{\ell'(k_0)}\Aa(t).\]
Equations (3.17)--(3.19) in \cite{IK} provide us with a one-parameter family of explicit solutions, parametrized by $k$, of the rescaled truncated system given by
\begin{equation}\label{sol-scaled-r1}
\begin{split}&\widetilde{\Aa}(w) = -\frac{\mu}{3}(1-
\text{sgn}(\mu)\rho^{1/2})-|\mu|\rho^{1/2}\text{sech}^2\left(\frac{\rho^{1/4}|\mu|^{1/2}\sigma^{1/2} w}{2}\right), \\
&\widetilde{\Bb}(w) = \widetilde{\Aa}'(w).
\end{split}
\end{equation}
Then, substituting $\Aa(t)=-3\widetilde{\Aa}(w)/2$ into the differential equation for $\Theta(t)$ gives
\[\begin{split}&\Theta(t)= \Theta_*+ \left(k_0 - \frac{\mu}{\ell'(k_0)}+\frac{2\mu}{\ell'(k_0)}\left(1-\text{sgn}(\mu)\rho^{1/2}\right)\right)t\\
&\hspace{1cm}+\frac{3\sqrt{2}\,\rho^{1/4}|\mu|^{1/2}}{\sigma^{1/2}\ell'(k_0)}\tanh\left(\frac{\rho^{1/4}|\mu|^{1/2}\sigma^{1/2}t}{\sqrt{2}}\right),
\end{split}\]
where $\Theta_* \in \R \setminus 2\pi\ZZ$ is arbitrary and $\rho = 1 +24k.$

It remains to see if the above family of solutions of the rescaled truncated normal-form system persist as solutions of the full rescaled normal-form system.  Luckily, the persistence of~\eqref{sol-scaled-r1} under reversible perturbations has received considerable treatment (see, for example, the work of Iooss \& Kirchg\"assner in \cite{IK}). 
In particular, these persistence results are summarized for $\C^m$ vector fields in \cite[Theorem 3.10]{HI} and, in the present context, this work guarantees
that the family of explicit solutions \eqref{sol-scaled-r1} persists provided that
\begin{equation} \label{persistence}
r = |\mu|k^{1/2} > r_*(\mu) = \O(|\mu|^{1/2}).
\end{equation}
In particular, note that since $\mu$ is small the persistence condition \eqref{persistence} is effectively a lower bound on the frequency $k$, corresponding
to high-frequency oscillation in $\tilde{\cC}$. 

Finally, we undo the above variable changes to return to the original unknown function $\p$.  Undoing \eqref{rescalings1}
in \eqref{sol-existence-r1} yields
\[
\begin{split}
&\Aa(t) = \frac{\mu}{2}(1-\text{sgn}(\mu)\rho^{1/2}) + \frac{3}{2}|\mu|\rho^{1/2} \text{sech}^2\left(\frac{\rho^{1/4}|\mu|^{1/2}\sigma^{1/2}t}{\sqrt{2}}\right),\\
&\Bb(t) = \Aa'(t), \\
&\cC(t) = |\mu|k^{1/2} \exp\left(\i (k_0+\O(\mu))t + \i \Theta_* + \O(\mu)\right),
\end{split}
\]
while undoing the polynomial change of variables \eqref{change_of_var1} in the above normal form analysis yields
\begin{equation}\label{cov}
\begin{split}
&A(t) = \Aa(t) + \O(\mu^2\rho^{1/2}), \hspace{0.2cm} B(t) = \Bb(t) + \O(\mu^2\rho^{1/2}),\\ 
&\hspace{1.75cm} C(t) = \frac{1}{2}(\cC + \Cc) + \O(\mu^2\rho^{1/2}).
\end{split}
\end{equation}
Recalling now that \eqref{proj-r1} implies $A(t) = \p(t)+k_0^{-2} \p''(t)$, $C(t) = -k_0^{-2}\p''(t)$ and switching back to the original variable $x$, it follows that
\[
\begin{split}  
\p(x) &= A(x)+C(x) \\
&=  \frac{3}{2}|\mu|\rho^{1/2} \text{sech}^2\left(\frac{\rho^{1/4}|\mu|^{1/2}\sigma^{1/2}x}{\sqrt{2}}\right) + \frac{\mu}{2}(1-\text{sgn}(\mu)\rho^{1/2}) \\
&\hspace{1cm} +|\mu|k^{1/2} \cos\Big( (k_0+\O(\mu))x + \Theta_* + \O(\mu)\Big) + \O(\mu^2\rho^{1/2}).
\end{split}
\]
Here, $\Theta_* \in \R/2\pi \ZZ$ is an arbitrary integration constant. Due to the hyperbolic tangent in $\Theta$, there is an asymptotic phase shift in the cosinus term between $x = -\infty$ and $x = \infty$ of order $\O(\rho^{1/4}|\mu|^{1/2})$. 
 
Provided the persistence condition \eqref{persistence} holds, the function $\phi$ above solves the modified profile equation \eqref{modified-gra-cap}. 
For $\p$ to be a solution to the original profile equation \eqref{eq} with parameter $\mu$, it must additionally satisfy
the smallness assumption $\|\p\|_{H^5_{\uu}} \lesssim \delta$. This can be achieved by setting, for example,
\[
k = k'|\mu|^{-1-2\kappa}, \hspace{0.5cm} \text{for some $\kappa \in [0, 1/2)$ and some constant $k' > 0$}.
\]
Indeed, under this condition the persistence condition \eqref{persistence} is clearly met and
the functions $\Aa, \Bb, \cC, \Cc$ have amplitude $\O(|\mu|^{1/2-\kappa})$ which, in turn, implies that $\p, \p', \p'', \p'''$ are also $\O(|\mu|^{1/2-\kappa})$ via~\eqref{cov} and~\eqref{proj-r1}. 
This bound is carried over to the fourth and fifth derivatives by differentiating~\eqref{reduced-full} twice (see \cite[Theorem 3.3]{TWW}). 
It follows from choosing $\mu$ sufficiently small that the $H^5_{\uu}$ norm of $\p$ is small, and hence that $\p$ is a solution to~\eqref{eq} with parameter $\mu$. 
This establishes Theorem \ref{thm-existence-GSW-intro}.

\begin{rem}\label{rem-supercritical-GSW} When $\mu > 0$,~\eqref{normal-form-r1-trunc} features an orbit which is homoclinic to the saddle equilibrium $(\Aa, \Bb)=(0,0)$ once projected onto the $(\Aa, \Bb)$-plane. When $\mu < 0$, it is homoclinic to $(\Aa, \Bb) = (\frac{\mu}{2}(1+\rho^{1/2}), 0)$, which is close to $(0,0)$. In the latter case, we point out that this solution has supercritical wave speed. Indeed, equation~\eqref{gra-cap-Whitham} is invariant under a Galilean change of variable
	\[\p \mapsto \p + v, \hspace{0.3cm} c \mapsto c-2v, \hspace{0.3cm} (1-c)^2 b \mapsto (1-c)^2b+(1-c)v+v^2,\]
where $b$ is an integration constant which doesn't affect the critical wavespeed: see \cite{HJ}. Putting $v = \mu/2(1+\rho^{1/2})$, the new wave speed is 
\[c-2v = 1+\mu -2\cdot\frac{\mu}{2}(1+\rho^{1/2})= 1 + |\mu|\rho^{1/2} > 1. \]
To summarize, all generalized solitary-wave solutions in Theorem~\ref{thm-existence-GSW-intro} have supercritical wave speed $c > 1$. 
\end{rem}

\section{Existence of modulated solitary waves}
\label{sect-reduced-r2}
The aim of this section is to prove existence of modulated solitary waves in~\eqref{parameter-r2}. 
As for the classical two-dimensional gravity-capillary water wave equations, the signs of two terms in the normal form are to be determined -- one of 
those terms will be of cubic order.  Instead of deriving the full reduced ODE as in Section~\ref{sect-reduced-r1}, we only determine it roughly using the symmetries.  We then perform a normal form reduction and determine linear equations for the relevant normal form coefficients. From these, it will be clear which center manifold coefficients are necessary.  Throughout this section, we assume that the parameters
$\tau$ and $c_0$ satisfy \eqref{parameter-r2}.

\subsection{Normal form reduction}

As in Section \ref{sect-reduced-r1}, we will work with projection coefficients $A$, $B$, $C$, and $D$ rather than $\p, \p', \p''$ and $\p'''$. 
Using the transition matrix $\mathscr T_2$ from Section~\ref{sect-CMT} and proceeding along the same lines as the proof of Proposition \ref{trunc-syst}, we find
that \eqref{reduced-full} in this case is equivalent to the system
\begin{equation}\label{red-syst-r2}
\begin{dcases}\frac{\dif A}{\dif t} = B + sC\\
\frac{\dif B}{\dif t} = sD - \frac{1}{2s^2} \Psi(A, B, C, D, \mu)''''(0) \\
\frac{\dif C}{\dif t} = -sA+D + \frac{1}{2s^3}\Psi(A, B, C, D, \mu)''''(0)\\
\frac{\dif D}{\dif t} = -sB.
\end{dcases}
\end{equation}
Letting $U = (A, B, C, D)$, \eqref{red-syst-r2} can be rewritten as
\begin{equation}\label{diff-eq-r2}
\frac{\dif U}{\dif t} = \Ll U + \Rr(U,\mu),
\end{equation}
where here
\[
\Ll = \begin{pmatrix}
0 & 1 & s & 0 \\ 0 & 0 & 0 & s \\ -s & 0 & 0 & 1 \\ 0 & -s & 0 & 0
\end{pmatrix} \hspace{0.5cm}\text{and} \hspace{0.5cm} \Rr(U, \mu) = \frac{1}{2s^3}\begin{pmatrix}
0 \\ -s\Psi(U, \mu)''''(0) \\ \Psi(U, \mu)''''(0) \\ 0
\end{pmatrix}.
\]
In particular, in view of Theorem~\ref{CMT}(\ref{CMT-tan}) the matrix $\Ll$ is precisely the linearization of \eqref{red-syst-r2} about the trivial solution $0\in\R^4$.  
The spectrum of $\Ll$ is readily seen to consist of a pair of algebraically double and geometrically simple eigenvalues $\i s$ and $-\i s$ with corresponding eigenvectors and generalized eigenvectors 
\[
\zeta_0 = \begin{pmatrix}1\\ 0\\ \i\\ 0\end{pmatrix}, \quad \overline{\zeta_0} = \begin{pmatrix} 1 \\ 0 \\-\i \\0 \end{pmatrix}, \quad \zeta_1=\begin{pmatrix} 0\\ 1 \\ 0 \\\i\end{pmatrix} \quad \text{and} \quad \overline{\zeta_1} = \begin{pmatrix}0\\ 1\\ 0\\ -\i\end{pmatrix},
\]
that satisfy 
\[
\begin{split}&(\Ll-\i s)\zeta_0 = 0, \hspace{0.5cm} (\Ll-\i s)\zeta_1 = \zeta_0,\\
&(\Ll+\i s)\overline{\zeta_0} = 0, \hspace{0.5cm}(\Ll + \i s)\overline{\zeta_1} = \overline{\zeta_0}.
\end{split}
\]
As such, it is natural to expect that the system undergoes an $(\i s)^2$ bifurcation.

To analyze this bifurcation, observe that the set $\{\zeta_0, \zeta_1, \overline{\zeta_0}, \overline{\zeta_1}\}$ spans $\widetilde{\R^2} \times \widetilde{\R^2} \sim \R^4$, and  that the 
reversible symmetry $R\p(x) = \p(-x)$ restricted on $\Ker \T$ takes the form 
\[
R\colon (A, B, C, D) \mapsto (A, -B, -C, D)
\]
with respect to the basis $\{\cos(sx), x\cos(sx), \sin(sx), x\sin(sx)\}$. Furthermore, the vectors $\zeta_0$ and $\zeta_1$ also satisfy
\[
R\zeta_0 = \overline{\zeta_0} \hspace{0.5cm}\text{and}\hspace{0.5cm}R\zeta_1 =-\overline{\zeta_1}.
\]
Normal form theory for $(\i s)^2$ bifurcations now asserts that there exists a polynomial change of variable 
\[
U = \Aa \zeta_0 + \Bb \zeta_1 + \overline{\Aa \zeta_0} + \overline{\Bb \zeta_1} + \Phi(\Aa, \Bb, \overline{\Aa}, \overline{\Bb}, \mu),
\]
where $\Phi$ is a polynomial in $(\Aa, \Bb, \overline{\Aa}, \overline{\Bb})$ of degree 3, that transforms~\eqref{diff-eq-r2} into the normal form 
\begin{equation}
\label{normal-form-r2-gen}
\begin{dcases}
\frac{\dif \Aa}{\dif t} = \i s \Aa + \Bb + \i \Aa P\left(|\Aa|^2, \frac{\i}{2}(\Aa \overline{\Bb}- \overline{\Aa}\Bb)\right) + \rho_{\Aa}(\Aa, \Bb, \overline{\Aa}, \overline{\Bb}, \mu)\\
\frac{\dif \Bb}{\dif t} = \i s \Bb + \i \Bb P\left(|\Aa|^2, \frac{\i}{2}(\Aa \overline{\Bb}- \overline{\Aa}\Bb)\right) \\
\hspace{1cm}+ \Aa Q\left(|\Aa|^2, \frac{\i}{2}(\Aa \overline{\Bb}- \overline{\Aa}\Bb)\right) + \rho_{\Bb}(\Aa, \Bb, \overline{\Aa}, \overline{\Bb}, \mu).
\end{dcases}
\end{equation}
Here, the polynomials $P$ and $Q$ have degree $2$ in $(\Aa, \Bb, \overline{\Aa}, \overline{\Bb})$.  For details, see \cite[Section 4.3.3]{HI} and, specifically, Lemma 3.17 in that reference.

\subsection{Modulated solitary waves}
We now aim to consider the normal form system \eqref{normal-form-r2-gen} truncated at second-order terms. Let
\begin{equation}\label{PQexpansion-r2}
\begin{aligned}
&P\left(|\Aa|^2, \frac{\i}{2}(\Aa \overline{\Bb}- \overline{\Aa}\Bb)\right) = p_0 \mu + p_1 |\Aa|^2 + \frac{\i p_2}{2}(\Aa \overline{\Bb}- \overline{\Aa}\Bb),\\
&Q\left(|\Aa|^2, \frac{\i}{2}(\Aa \overline{\Bb}- \overline{\Aa}\Bb)\right) = q_0 \mu + q_1|\Aa|^2 + \frac{\i q_2}{2}(\Aa \overline{\Bb}- \overline{\Aa}\Bb).
\end{aligned}
\end{equation} 
The coefficients $q_0$ and $q_1$ are computed in Appendices~\ref{center-mani-coeff-r2} and~\ref{appendix-normal-form-r2}, 
\[
\begin{split}&q_0 = \frac{2}{c_0^2\ell''(s)} \quad \text{and} \quad q_1 = \frac{4(-c_0+\ell(2s)^{-1})^{-1}+8(1-c_0)^{-1}}{c_0^2 \ell''(s)}.
\end{split}
\]
One can check that this agrees with the formulas given in Theorem~\ref{thm-existence-r2-intro} by using that $m(s)=\ell(s)^{-1}$, $\ell(s)=c_0^{-1}$ and $\ell'(s)=0$. Moreover, $q_0$ and $q_1$ are both negative  because $c_0 < 1$, $c_0\ell(2s)< 1$ while $\ell''(s)<0$ for each $s > 0$ as illustrated in Figure~\ref{fig-symbol}.  Recalling that $\mu<0$ in this case, the above puts \eqref{normal-form-r2-gen} into the subcritical case considered by Iooss \& P\`erou\'eme in \cite[Section IV3]{IP}. Through the change of variables 
\[\Aa(t)=r_0(t) \exp(\i(s t + \Theta_0(t))), \hspace{0.5cm} \Bb(t)=r_1(t) \exp(\i(st+\Theta_1(t))),\]
the normal form truncated at third order terms has explicit homoclinic solutions
\[\begin{split} &r_0(t) = \sqrt{\frac{-2 q_0 \mu}{q_1}} \text{sech}(\sqrt{q_0\mu} \, t), \\
	&r_1(t) = |r_0'|, \\
	&\Theta_0(t) = p_0 \mu t - \frac{2p_1\sqrt{q_0 \mu}}{q_1} \tanh(\sqrt{q_0\mu}\,t)+\Theta_*,\\
	&\Theta_1 - \Theta_0 \in \{0, \pi\},
\end{split}\]
(see~\cite[pp.217--223]{HI}). Here, $p_0, p_1$ are as in~\eqref{PQexpansion-r2} and $\Theta_* \in \R$ is an arbitrary integration constant, resulting in a full circle of homoclinic solutions. However, only two distinct homoclinic solutions persist under reversible perturbation, when $\Theta_* = 0$ and $\Theta_* = \pi$. Tracing back to the original unknown $\p$ and variable $x$, we get 
\[\p(x)=\sqrt{\frac{-8 q_0 \mu}{q_1}} \text{sech}(\sqrt{q_0\mu} \, x) \cos\left(sx + \O\left(|\mu|^{1/2}\right)\right) + \O(\mu^2),\]  
and
\[\p(x)=-\sqrt{\frac{-8 q_0 \mu}{q_1}} \text{sech}(\sqrt{q_0\mu} \, x) \cos\left(sx + \O\left(|\mu|^{1/2}\right)\right) + \O(\mu^2).\]
The first solution $\p$ is often referred to as a modulated solitary wave of elevation and the latter is a modulated solitary wave of depression. We illustrate the elevation case in Figure~\ref{GSW}. Due to the hyperbolic tangent, there is an asymptotic phase shift of order $\O(|\mu|^{1/2})$ between $x=-\infty$ and $x=\infty$. Lastly, it can be shown that $\p, \p', \p''$ and $\p'''$ are of order $\O(|\mu|^{1/2})$ by arguing as in the previous section. The uniform locally Sobolev norm $\|\p\|_{H^5_{\uu}}$ can thus be made arbitrarily small, qualifying these as solutions to~\eqref{eq} with parameters~\eqref{parameter-r2}.  This establishes Theorem~\ref{thm-existence-r2-intro}.

\section*{Acknowledgements}
The work of M. A. Johnson was partially funded by the Simons Foundation Collaboration grant number 714021.  
T. Truong gratefully acknowledges the support of the Swedish Research Council, grant no. 2016-04999. Also, Truong appreciates the discussions and support she has received from her supervisor Erik Wahlén. Finally, the authors thank the reviewers for their careful reading, insightful comments and suggestions. 

\appendix

\section{Fredholm theory for pseudodifferential operators}
\label{Fredholm-appendix}

In this appendix, we review a Fredholm theory for pseudodifferential operators developed by Grushin in \cite{Grushin}.  This theory is applied in Section \ref{sect-lin-op} to determine the Fredholm
properties of the linear operator $\T$.

Let $x^* = (x_0, x) \in \R \times \R^n$ and
\[X^* = \{ x^* \in \R^{n+1} \, | \, x_0 \geq 0, x^* \neq 0\}.\]
Similarly, let $\xi^* = (\xi_0, \xi) \in \R \times \R^n$ and
\[E^* = \{ \xi^* \in \R^{n+1} \, |\, \xi_0 \geq 0, \xi^* \neq 0\}.\]
Let $\mathcal{A}$ be the class of functions $A(x^*, \xi^*) \in C^\infty(X^* \times E^*)$ such that $A$ is positive-homogeneous of degree 0 in $x^*$ and $\xi^*$, that is,
\[A(\lambda x^*, \xi^*) = A(x^*, \lambda \xi^*) = A(x^*, \xi^*), \hspace{0.5cm} \lambda > 0.\]
Let $\SS^n_+$ denote the hemisphere $|x^*| = 1$ and $x_0 > 0$, or $|\xi^*| = 1$ and $\xi_0 > 0$. Let $\overline{\SS^n_+}$ denote the relative closure of $\SS^n_+$ in $X^*$, or in $E^*$, that is, $\overline{\SS^n_+}$ is the hemisphere $|x^*| = 1$ (or $|\xi^*| = 1$), $x_0 \geq 0$ (or $\xi_0 \geq 0)$. Clearly, each $A \in \mathcal{A}$ is uniquely determined by its values on $\SS^n_+ \times \SS^n_+$. Conversely, each function $\tilde{A} \in C^\infty(\overline{\SS^n_+} \times \overline{\SS^n_+})$ can be uniquely homogeneously extended to $X^* \times E^*$. So, $\mathcal{A} \cong C^\infty(\overline{\SS^n_+} \times \overline{\SS^n_+})$.
By $S^0_\mathcal{A}$, we denote the set of symbols $p_A(x, \xi)$ which are given by
\[p_A(x, \xi) = A(1, x, \, 1, \xi),\]
for some $A \in \mathcal{A}$. For $p_A \in S^0_\mathcal{A}$, we have the following result, which combines Theorems~4.1 and~4.2 in Grushin~\cite{Grushin}.
\begin{thm}\label{Fredholm-Grushin} If $p_A(x, \xi) \in S^0_\mathcal{A}$ and $\det A(x^*, \xi^*) \neq 0$ on $\Gamma$, then
  \[p_A(x, D)\colon H^s \to H^s\]
  is Fredholm and the index is
  \[\ind  p_A(x, D) = \frac{1}{2\pi} \Big(\arg \det\, A(x^*, \xi^*)\Big|_{\Gamma}\Big),\]
  where $\Gamma$ is the boundary of $\overline{\SS^n_+} \times \overline{\SS^n_+}$, and $\arg \det \,A(x^*, \xi^*)|_\Gamma$ is the increase in the argument of $\det\, A(x^*, \xi^*)$ around $\Gamma$ oriented counterclockwise.
\end{thm}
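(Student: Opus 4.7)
The plan is to establish Fredholmness via a parametrix construction within the class $S^0_\mathcal{A}$, and to derive the index formula by combining homotopy invariance of the Fredholm index with explicit computation on model operators. The underlying geometric picture is that symbols in $S^0_\mathcal{A}$ extend continuously to a natural compactification of phase space $\R^n \times \R^n$ obtained by adjoining the hemispheres $\overline{\SS^n_+}$ at infinity in the $x$- and $\xi$-directions, yielding a compact manifold with corners whose boundary at infinity is precisely $\Gamma$.

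For the Fredholm part, the hypothesis $\det A \neq 0$ on the compact set $\Gamma$, together with continuity, yields a uniform lower bound $|\det A| \geq c > 0$ on an open neighborhood of $\Gamma$, which translates to $|\det p_A(x, \xi)| \geq c > 0$ whenever $|x| + |\xi|$ lies outside a large compact subset of phase space. I would then define $q(x, \xi) = \chi(x, \xi)\, p_A(x, \xi)^{-1}$, where $\chi$ is a smooth cutoff equal to $1$ outside this compact set and vanishing near the origin, verify that $q$ lies in $S^0_\mathcal{A}$, and observe that $q \cdot p_A - 1$ has compactly supported symbol. The composition rule for this pseudodifferential calculus then implies that $q(x, D) p_A(x, D) - I$ and $p_A(x, D) q(x, D) - I$ are operators with compactly supported symbols, hence compact on $H^s$. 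This produces simultaneous left and right regularizers, from which Fredholmness follows by standard arguments.

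For the index formula, I would appeal to homotopy invariance: the Fredholm index is a locally constant, integer-valued function on the space of Fredholm operators, so $\ind p_A(x, D)$ depends only on the homotopy class of the map $\det A|_\Gamma: \Gamma \to \CC^*$ within the class of nonvanishing continuous maps. In the scalar case with $n=1$, which is what is applied in this paper, $\Gamma$ is topologically a circle and such homotopy classes are classified precisely by the winding number $\tfrac{1}{2\pi}\arg \det A|_\Gamma$. Both sides of the claimed identity are thus integer-valued homotopy invariants of the symbol, so it remains to check agreement on a generating family of explicit models. Natural candidates are shift-type symbols realizing each integer winding number, whose indices can be computed directly from the dimensions of their kernels and cokernels.

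The main obstacle is this last matching step: constructing a homotopy from a general $A$ to a chosen model while remaining inside $S^0_\mathcal{A}$ with nonvanishing $\det A|_\Gamma$ throughout, and pinning down the normalization by computing the index of at least one nontrivial model, are the substantive parts of the argument. By contrast, the parametrix construction is relatively mechanical once the compactification viewpoint is adopted; the care needed there lies mainly in verifying that the cutoff parametrix $q$ possesses the homogeneity structure required to belong to $S^0_\mathcal{A}$.
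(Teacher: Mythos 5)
A preliminary remark: the paper does not prove this statement at all --- it is quoted from Grushin \cite{Grushin} (combining his Theorems~4.1 and~4.2) and used as a black box in Section~\ref{sect-lin-op} --- so your attempt has to be judged on its own terms rather than against an internal proof. Your Fredholm half follows the standard parametrix route for this class and is viable, but one step is misstated: the remainders $q(x,D)\,p_A(x,D)-I$ and $p_A(x,D)\,q(x,D)-I$ do \emph{not} have compactly supported symbols. Only the pointwise product $q\cdot p_A-1=\chi-1$ is compactly supported; the composition calculus produces correction terms that merely gain one order of decay jointly in $x$ and $\xi$ (the class $S^0_{\mathcal{A}}$ sits inside an SG/scattering-type calculus with $|\partial_x^\alpha\partial_\xi^\beta p_A|\lesssim\langle x\rangle^{-|\alpha|}\langle\xi\rangle^{-|\beta|}$). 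Compactness of the remainders on $H^s$ then follows from this joint decay by a Rellich-type argument, not from compact support; the conclusion stands, but the justification as written is incorrect.

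The genuine gap is in the index formula, which is precisely the part the paper uses (to conclude index four for the conjugated operator in Section~\ref{sect-lin-op}). Homotopy invariance only reduces the claim to two tasks that you leave undone: (a) joining an arbitrary elliptic symbol whose determinant has winding $n$ on $\Gamma$ to a chosen model with the same winding, by a path along which quantization stays norm-continuous and ellipticity on $\Gamma$ is preserved --- this requires extending a boundary homotopy on $\Gamma$ to the whole compactification while retaining the smoothness and bi-homogeneity defining $\mathcal{A}$; and (b) computing the index of at least one model with nonzero winding, which is where the normalization and the sign of the formula are actually decided. Point (b) is more delicate than ``shift-type symbols'' suggests: any $x$-independent elliptic Fourier multiplier picks up equal and opposite winding contributions from the two spatial-infinity faces of $\Gamma$, hence has total winding zero and index zero, so a model realizing winding $\pm 1$ must have genuinely joint $(x,\xi)$-behaviour near the corner --- for instance a suitably normalized annihilation-type symbol built from $x+\mathrm{i}\xi$, whose quantization has index one (kernel spanned by the Gaussian), together with a verification that this symbol really lies in $S^0_{\mathcal{A}}$. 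Without (a) and (b) your argument establishes Fredholmness but not the identity $\ind p_A(x,D)=\frac{1}{2\pi}\arg\det A|_{\Gamma}$, i.e.\ not the content on which the paper's Fredholm-index computation relies.
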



\section{A nonlocal center manifold theorem}\label{CMT-appendix}

In this section, we record a version due to Truong, Wahl\'{e}n \& Wheeler~\cite{TWW} of the nonlocal center manifold theorem originally developed by Faye \& Scheel~\cite{FS, FS-corrig}.  
This result is the main analytical tool used throughout Section \ref{sect-reduced-r1} and Section \ref{sect-reduced-r2}.

We consider nonlocal nonlinear parameter-dependent problems of the form
\begin{equation} \label{original} \T v + \N(v, \mu) = 0,\end{equation}
where
\[\T v = v + \K*v,\]
in the weighted Sobolev spaces $H^m_{-\eta}$ for some $\eta > 0$ and positive integer $m$. $\T$ is referred to as the linear part and $\N$ as the nonlinear part of~\eqref{original}. Before introducing the modified equation, we define a cutoff operator $\chi$ which is invariant under all translations and reversible symmetries. The translation map by $t \in \R$, that is $\p \mapsto \p(\, \cdot \, + t)$, is denoted by $S_t$. First, let $\cchi\colon \R \to \R$ be a smooth cutoff function satisfying $\cchi=1$ for $|x|<1$, 0 for $|x|>2$ and $\sup_{x \in \R}|\cchi'(x)| \leq 2$. Secondly, let $\theta\colon \R \to \R$ be an even and smooth function with 
\[\sum_{j\in \ZZ} \theta(x-j) = 1, \hspace{0.3cm}\text{supp} \, \theta \subset [-1,1], \hspace{0.3cm} \theta \left( \left[0,\tfrac{1}{2}\right]\right) \subset \left[\tfrac{1}{2}, 1 \right],\]
for all $x\in \R$. Define
\begin{equation}\label{cutoff}
\chi\colon v \mapsto \int_\R \cchi(\|S_y\theta \cdot v\|_{H^m})\theta(x-y)v(x)\dif y \hspace{0.3cm}\text{and}\hspace{0.3cm}\chi^\delta \colon v \mapsto \delta \cdot \chi\left(\frac{v}{\delta}\right), \hspace{0.2cm} \delta > 0.
\end{equation}
It has been shown~\cite{FS-corrig} that $\chi:H^m_{-\eta} \to H^m_{\uu}$ is well-defined, Lipschitz continuous, invariant under all $S_t$ and reversible symmetries on $\R$, and its image is contained in a ball in $H^m_{\uu}$. As a consequence, the scaled cutoff $\chi^\delta$ inherits all these properties except for its image, which will be contained in a ball of radius $\delta$ in $H^m_{\uu}$. The modified equation is 
\begin{equation}\label{modified} \T v + \N^\delta(v, \mu) = 0 \hspace{0.5cm}\text{where} \hspace{0.5cm} \N^\delta(v, \mu) \coloneqq \N(\chi^\delta(v), \mu).\end{equation}

Also, let $\Q\colon H^m_{-\eta} \to H^m_{-\eta}$ be a bounded projection on the nullspace $\Ker \, \T$ of $\T$ with a continuous extension to $H^{m-1}_{-\eta}$, such that $\Q$ commutes with the inclusion map from $H^m_{-\eta}$ to $H^m_{-\eta'}$, for all $0 < \eta' < \eta$. 

\begin{hyp}[The linear part $\T$] \label{hyp-T} $ $
  \begin{itemize}
    \item[(i)] There exists $\eta_0 > 0$ such that $\K \in L^1_{\eta_0}$.
    \item[(ii)] The operator
          \[\T\colon v \mapsto v + \K*v, \hspace{0.5cm}  H^m_{-\eta} \to H^m_{-\eta}\]
          is Fredholm for $\eta \in (0, \eta_0)$, its nullspace $\Ker \, \T$ is finite-dimensional and $\T$ is onto. 
  \end{itemize}
\end{hyp}

\begin{rem}
A straightforward application of Young's inequality shows that Hypothesis \ref{hyp-T}(i) implies the operator $\mathcal{T}\colon H^m_{-\eta}\to H^m_{-\eta}$
is bounded for each $\eta\in(0,\eta_0)$ for each choice of $c_0$. 	 In the works \cite{FS,FS-corrig} the authors additionally assumed
that $K'\in L^1_{\eta_0}$ for some $\eta_0 > 0$ which, as seen from Proposition \ref{K-prop}, does not hold for the current case.  This assumption, however,
is used to guarantee Hypothesis \ref{hyp-T}(ii) which, here, we instead require directly.
\end{rem}

\begin{hyp}[The nonlinear part $\N$] \label{hyp-N} There exist $k \geq 2$, a neighborhood $\U$ of $0 \in H^m_{-\eta}$ and $\V$ of $0 \in \R$, such that for all sufficiently small $\delta > 0$, we have
  \begin{itemize}
    \item[(a)] $\N^\delta \colon H^m_{-\eta} \times \V \to H^m_{-\eta}$ is $\C^k.$ Moreover, for all non-negative pairs $(\zeta, \eta)$ such that $0 < k\zeta <\eta < \eta_0$, $\D_v^l\N^\delta(\, \cdot \,, \mu) \colon (H_{-\zeta}^j)^l \to H_{-\eta}^j$ is bounded for all $0 < l\zeta \leq \eta < \eta_0$ and $0 \leq l \leq k$, and is Lipschitz in $v$ for $1 \leq l \leq k-1$ uniformly in $\mu \in \V$.
        \item[(b)] $\N^\delta$ commutes with translations of $v$,
    \[\N^\delta(S_t v, \mu) = S_t \N(v, \mu), \hspace{0.5cm} \text{for all} \, \, t \in \R.\]
    	\item[(c)] $\N^\delta(0,0) = 0$, ${\D}_v\N^\delta(0,0) = 0$ and as $\delta \to 0$, the Lipschitz constant 
          \[\text{Lip}_{H^m_{-\eta}\times \V} \N^\delta = \O(\delta + |\mu|).\]
  \end{itemize}
\end{hyp}
Let $v\colon \R \to \R$ be a function. A symmetry is a triple $(\rho, S_t, \kappa) \in \mathbf{O}(1) \times (\R \times \mathbf{O}(1))$, acting on $v$ in the following way: the orthogonal linear transformation $\rho \in \mathbf{O}(1)$ acts on the value $v(x)\in \R$, while $S_t$ and $\kappa$ act on the variable $x \in \R$. A symmetry $(\rho, S_t, \kappa)$ is equivariant if $\kappa = \Id$, and reversible otherwise. 
\begin{hyp}[Symmetries] \label{hyp-S} There exists a symmetry group $S$, under which the equation is invariant, that is
  \[\gamma (\T v) = \T (\gamma v), \hspace{0.5cm} \N(\gamma v, \mu) = \gamma \N(v, \mu), \hspace{0.5cm} \text{for all $\gamma \in S$,}\]
such that $S$ contains all translations on the real line. 
\end{hyp}

\begin{thm} \label{abstract-CMT} Assume Hypotheses~\ref{hyp-T},~\ref{hyp-N} and~\ref{hyp-S} are met for the~\eqref{original}. Then, by possibly shrinking the neighborhood $\V$ of $0\in \R$, there exists a cutoff radius $\delta > 0$, a weight $\eta>0$ and a map
  \[\Psi\colon \Ker  \T \times \V \subset H^m_{-\eta}\times \R \to \Ker \Q \subset H^m_{-\eta}\]
with the center manifold
  \[\M_0^\mu =\big\{ v_0 + \Psi(v_0, \mu) \, \big| \, v_0 \in \Ker \T, \mu \in \V \big\} \subset H^m_{-\eta},\]
as its graph for each $\mu$. The following statements hold:
  \begin{itemize}
    \item[(i)] (smoothness) $\Psi \in \C^k$, where $k$ is as in Hypothesis~\ref{hyp-N};
    \item[(ii)] (tangency) $\Psi(0,0) = 0$ and ${{\D_{v_0}}}\Psi(0,0);$
    \item[(iii)] (global reduction) $\M_0^\mu$ consists precisely of functions $v$ such that $v \in H^m_{-\eta}$ is a solution of the modified equation~\eqref{modified} with parameter $\mu$;
    \item[(iv)] (local reduction) any function $v$ solving~\eqref{original} with $\|v\|_{H^m_{\uu}} \lesssim \delta$ is contained in $\M_0^\mu$;
    \item[(v)] (translation invariance) the shift $S_t$, $t \in \R$ acting on $\M_0^\mu$ induces a $\mu$-dependent flow
          \[\Phi_t\colon \Ker \T \to \Ker \T\]
          through $\Phi_t = \Q \circ S_t \circ (\Id + \Psi)$;
    \item[(vi)] (reduced vector field) the reduced flow $\Phi_t(v_0, \mu)$ is of class $\C^k$ in $v_0, \mu, t$ and is generated by a reduced parameter dependent vector field $f$ of class $\C^{k-1}$ on the finite-dimensional $\Ker \T$;
    \item[(vii)] (correspondence) any element $v = v_0 + \Psi(v_0, \mu)$ of $\M_0^\mu$ corresponds one-to-one to a solution of
          \[\frac{\dif v_0}{\dif t} = f(v_0) \coloneqq \frac{\dif}{\dif t} \Q(S_t v)\Big|_{t=0};\]
    \item[(viii)] (equivariance) $\Ker \T$ is invariant under $\Gamma$ and $\Q$ can be chosen to commute with all $\gamma \in S$. Consequently, $\Psi$ commutes with $\gamma \in S$ and $\M_0^\mu$ is invariant under $\Gamma$. Finally, the reduced vector field $f$ in item (vi) commutes with all translations $S_t$ and anticommutes with reversible symmetries in $S$.
  \end{itemize}
\end{thm}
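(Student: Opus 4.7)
The strategy is a Lyapunov--Perron type fixed-point argument, decomposing $H^m_{-\eta} = \Ker\T \oplus \Ker\Q$ via the projection $\Q$ and converting the modified equation~\eqref{modified} into a parametric fixed-point problem on $\Ker\Q$. Writing $v = v_0 + w$ with $v_0 := \Q v \in \Ker\T$ and $w := (\Id-\Q)v \in \Ker\Q$, equation~\eqref{modified} becomes $\T w = -\N^\delta(v_0 + w, \mu)$ since $\T v_0 = 0$. Hypothesis~\ref{hyp-T}(ii) ensures that $\T$ restricted to $\Ker\Q$ is a continuous bijection onto $H^m_{-\eta}$, and hence by the open mapping theorem admits a bounded inverse $J \colon H^m_{-\eta} \to \Ker\Q$. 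The problem is thus recast as the parameterized fixed-point equation
\[
w = -J\,\N^\delta(v_0 + w, \mu), \qquad w \in \Ker\Q.
\]

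For each $(v_0, \mu)$ in a sufficiently small neighborhood of the origin in $\Ker\T \times \V$, I would apply the Banach contraction mapping principle. By Hypothesis~\ref{hyp-N}(c), the Lipschitz constant of $\N^\delta$ on $H^m_{-\eta}$ is $\O(\delta + |\mu|)$, so shrinking $\delta$ and $\V$ makes the right-hand side a strict contraction in $w$ uniformly in $(v_0, \mu)$. The unique fixed point defines $\Psi(v_0, \mu) \in \Ker\Q$, and its graph is the claimed manifold $\Mm_0^\mu$. Statement (iii) (global reduction) is then built into the construction; the tangency (ii) follows from $\N^\delta(0,0) = 0$ and $\D_v\N^\delta(0,0) = 0$ together with uniqueness; and the local reduction (iv) follows because $\chi^\delta$ acts as the identity on functions of sufficiently small $H^m_{\uu}$ norm, so any small solution of~\eqref{original} also solves~\eqref{modified} and must lie on $\Mm_0^\mu$.

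Establishing the $\C^k$ smoothness claim (i) is the main technical obstacle. A direct application of the uniform contraction principle in $\C^k$ is obstructed by Hypothesis~\ref{hyp-N}(a): successive derivatives $\D_v^l \N^\delta$ are bounded only when mapping into spaces with \emph{stronger} exponential decay, so each differentiation costs a loss of $\zeta$ in the weight. To circumvent this I would work on a scale of weighted spaces $\{H^m_{-\eta_l}\}$ with a decreasing sequence $\eta_l = \eta - l\zeta$, chosen so that $k\zeta < \eta < \eta_0$, and apply a scale-of-Banach-spaces version of the implicit function theorem to the fixed-point equation. An induction on $l$ then shows $\Psi \in \C^l$ from $\Ker\T \times \V$ into $H^m_{-\eta_l}$; after relabeling $\eta := \eta_k$ at the end, one obtains the required $\C^k$ map into a fixed weighted space. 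This careful weight bookkeeping is precisely the point where the nonlocal theory of Faye--Scheel~\cite{FS,FS-corrig} (and its refinement~\cite{TWW}) diverges from the classical local theory, and where the scale-of-spaces content of Hypothesis~\ref{hyp-N}(a) is essential.

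Finally, the dynamical and symmetry statements (v)--(viii) follow from uniqueness of $\Psi$ combined with the covariance of the fixed-point equation. Since $\T$ and $\N^\delta$ both commute with translations $S_t$ and, more generally, with every $\gamma \in S$ (Hypotheses~\ref{hyp-N}(b) and~\ref{hyp-S} together with the $S$-invariance of $\chi^\delta$), and since $\Q$ can be chosen to commute with all $\gamma \in S$---possible because $\Ker\T$ is $S$-invariant, inherited from the equivariance of $\T$---the fixed-point equation for $w$ is $S$-covariant. Uniqueness then forces $\Psi(\gamma v_0, \mu) = \gamma\Psi(v_0, \mu)$ for $\gamma \in S$, giving (viii); in the special case $\gamma = S_t$ this yields $S_t \Mm_0^\mu = \Mm_0^\mu$, allowing one to define the flow $\Phi_t := \Q \circ S_t \circ (\Id + \Psi)$ on $\Ker\T$. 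Smoothness of $\Phi_t$ jointly in $(v_0, \mu, t)$ follows from smoothness of $\Psi$ and of the shift, and the generator $f(v_0) = \tfrac{d}{dt}\Q(S_t v)|_{t=0}$ is $\C^{k-1}$ because differentiating once in $t$ costs one derivative in the shift action; its equivariance follows immediately from that of $\Psi$.
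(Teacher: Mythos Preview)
The paper does not prove Theorem~\ref{abstract-CMT}; it is simply recorded in Appendix~\ref{CMT-appendix} as a result quoted from \cite{TWW} (refining \cite{FS,FS-corrig}) and used as a black box. There is therefore no proof in the paper to compare your proposal against.

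That said, your sketch is a faithful outline of the Lyapunov--Perron contraction argument actually carried out in those references. One point worth tightening: you restrict $(v_0,\mu)$ to a small neighborhood of the origin in $\Ker\T\times\V$, but the theorem as stated defines $\Psi$ on \emph{all} of $\Ker\T\times\V$. This is not a cosmetic difference: the cutoff $\chi^\delta$ forces $\N^\delta$ to have globally small Lipschitz constant on $H^m_{-\eta}$ (independent of $v_0$), so the contraction holds uniformly for every $v_0\in\Ker\T$, not only small ones. Global definition is essential for item~(v), since $S_t v_0$ need not remain in any fixed bounded set of $\Ker\T$ as $t$ varies. Apart from this, your treatment of the smoothness via a scale of weighted spaces and of the symmetry statements via uniqueness of the fixed point matches the strategy in \cite{FS,FS-corrig,TWW}.
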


\section{Coefficients in center manifold reduction}

In this appendix, we compute the center manifold coefficients $\psi_{pqlmn} \coloneqq \Psi_{pqlmn}''''(0)$ up to second-order terms in Proposition~\ref{trunc-syst}
as well as the coefficients
\[
\psi_{10001}, \hspace{0.2cm} \psi_{20000}, \hspace{0.2cm}\psi_{00200}, \hspace{0.2cm} \psi_{10010}, \hspace{0.2cm}\psi_{01100}, \hspace{0.2cm} \psi_{10200}, \hspace{0.2cm} \psi_{30000}
\]
from Section~\ref{sect-reduced-r2}. The proof Proposition~\ref{trunc-syst} observes that $\T$, $\Id-\T$ and $\Q$ map even to even, and odd to odd functions. Also, the basis functions of $\Ker \T$ are either even or odd functions. Using these, vanishing $\psi_{rslmn}$ are identified and excluded. Then, linear equations for non-vanishing $\psi_{rslmn}$ are written down. To compute $\psi_{rslmn}$, we will extensively use
\begin{equation}
\label{even-multiplier}
\begin{split}
&\mathbf{m}({\D})\colon x^{2k}\cos(yx) \mapsto \sum_{j=0}^k \binom{2k}{2j} (-1)^{j}\mathbf{m}^{(2j)}(y)\cdot x^{2(k-j)}\cos(yx) \\
&\hspace{2cm}+  \sum_{j=0}^{k-1} \binom{2k}{2j+1}(-1)^{j}\mathbf{m}^{(2j+1)}(y)\cdot x^{2(k-j)-1}\sin(yx)\\
&\mathbf{m}({\D}) \colon x^{2k+1}\sin(yx) \mapsto \sum_{j=0}^k \binom{2k+1}{2j} (-1)^{j}\mathbf{m}^{(2j)}(y)\cdot x^{2(k-j)+1}\sin(yx) \\
&\hspace{2cm}+  \sum_{j=0}^{k} \binom{2k+1}{2j+1}(-1)^{j+1}\mathbf{m}^{(2j+1)}(y)\cdot x^{2(k-j)}\cos(yx),\\
\end{split}
\end{equation}
where $\mathbf{m}\colon \R \to \R$ is an even multiplier and $y \in \R$. Finally, we observe that if $f$ satisfies $\T f = g$, then $h\coloneqq f - \Q f$ satisfies $\T h = g$ and $\Q h = 0$.

\subsection{For generalized solitary waves}
\label{center-mani-coeff-r1}
Here, let $\tau,c_0$ satisfy \eqref{parameter-r1} and note, specifically, that $c_0=1$ here.
Equation~\eqref{eq} with $\p = \Q_1\p + \Psi$ is
\[
\T \Psi  - \mu \Q_1 \p+ (\Id - \T)(\Q_1\p + \Psi)^2 = 0.
\]
By noting that second-order $\mu$-inhomogeneous terms come from $\T\Psi$ and $(\Id - \T) (\Q_1 \p)^2$, the linear equations from grouping
 $A^2, B^2, C^2, D^2$, $AB, AC, AD$, $BC, BD$ and $CD$ terms are given by
\begin{center}
\def\arraystretch{1.5}
\begin{tabular}{l l}
	$ \T \Psi_{20000} + (\Id - \T) 1 = 0,$ & $ \T \Psi_{00200} + (\Id - \T) \cos^2(k_{0} x)= 0,$ \\
	$\T \Psi_{02000} + (\Id - \T)\, x^2 = 0, $ & $ \T \Psi_{00020} + (\Id - \T) \sin^2(k_{0} x)= 0, $ \\
	$ \T \Psi_{11000} + 2(\Id - \T)\,x = 0,$ & $ \T \Psi_{10100} + 2(\Id - \T)\cos(k_{0}x) = 0, $ \\
	$ \T \Psi_{10010} + 2(\Id - \T)\sin(k_{0}x) =0,\hspace{1cm}$ & $\T \Psi_{01100} + 2(\Id - \T)\, x \cos(k_{0}x)  = 0, $ \\
	$\T \Psi_{00110} + (\Id - \T)\sin(2k_{0} x)= 0, $ & $\T \Psi_{01010} + 2(\Id - \T) \, x \sin(k_0x) = 0,$
\end{tabular}
\end{center} 
and, by noting that the $\mu$-homogeneous terms come from $\T\Psi$ and $-\mu \Q_1 \p$,
\begin{center}
	\def\arraystretch{1.5}
	\begin{tabular}{l l}
		$\T \Psi_{10001} - 1 = 0, \hspace{0.5cm} $ & $\T \Psi_{00101} - \cos(k_{0}x) = 0, $ \\
		$\T \Psi_{01001} - x = 0, $ & $\T \Psi_{00011} - \sin(k_{0}x) = 0. $ 
	\end{tabular}
\end{center}
Note that equations arising from grouping $AB, AD, BC, CD, \mu B$ and $\mu D$ terms are excluded here since they involve only odd functions. 
Using~\eqref{even-multiplier} with $\mathbf{m} = \ell$ and $y =0, k_0$ or $2k_0$, we arrive at 
\begin{center}
	\def\arraystretch{1.5}
	\begin{tabular}{l l}
		$\T \Psi_{10001} = 1, $ & $ \T \Psi_{00101} = \cos(k_{0}x),$ \\
		$\T \Psi_{20000} = -1, $ & $ \T \Psi_{10100} =- 2\cos(k_{0}x), $ \\ 
		$\T \Psi_{02000} = - x^2 +\ell''(0)$ & $ \T \Psi_{00200} =  -\frac{1}{2}-\frac{1}{2}\ell(2k_{0})\cos(2k_{0}x),$ \\
		$\T \Psi_{01010} = -2x \sin(k_0x) + 2\ell'(k_{0}) \cos(k_{0} x),\hspace{0.5cm} $ & $ \T \Psi_{00020} =  -\frac{1}{2}+ \frac{1}{2}\ell(2k_{0})\sin(2k_{0}x),$ \\
	\end{tabular}
\end{center}
all subjected to the condition $\Q_1 \Psi_{pqlmn} = 0$, which ensures uniqueness.

Let $\sigma = \ell''(0)^{-1} = (1/3-\tau)^{-1}$. Lengthy but straightforward calculations employing~\eqref{even-multiplier} with $\mathbf{m}=1-\ell$ now yield
\begin{center}
	\def\arraystretch{2.2}
\begin{tabular}{l l}
$\psi_{10001} = -\psi_{20000} = 2\sigma k_0^2,$ & $\psi_{00101} = -\frac{1}{2}\psi_{10100}= -\dfrac{2k_0^3}{\ell'(k_0)},$ \\
$\psi_{02000} =  - \dfrac{\ell''''(0) - 6\sigma^{-2}}{3}\sigma^2 k_0^2 - 4\sigma,$ & $\psi_{01010} = 2\dfrac{\ell''(k_0) - 2\ell'(k_0)^2}{\ell'(k_0)^2} k_0^3 - \dfrac{10}{\ell'(k_0)}k_0^2,$\\
$\psi_{00200} = \dfrac{8\ell(2k_0)}{\ell(2k_0)-1} k_0^4 - \sigma k_0^2,$ & $\psi_{00020} = -\dfrac{8\ell(2k_0)}{\ell(2k_0)-1}k_0^4 - \sigma k_0^2.$
\end{tabular}
\end{center}

\subsection{For modulated solitary waves}
\label{center-mani-coeff-r2}

Now, let $\tau,c_0$ satisfy \eqref{parameter-r2} and note in this case that 
both $c_0$ and $\tau_0$ are parametrized by $s\in(0, \infty)$. The linear equations for the center manifold coefficients are
\[\begin{split}&\T \Psi_{10001} = \frac{1}{c_0} (\Id - \T)\cos(sx) ,\\
&\T \Psi_{20000} = - \frac{1}{c_0}(\Id - \T) \cos^2(sx), \\
&\T \Psi_{10100} = - \frac{1}{c_0}(\Id-\T)\sin(2sx),\\
&\T \Psi_{00200} = - \frac{1}{c_0}(\Id-\T)\sin^2(sx), \\
&\T \Psi_{10010} = \T \Psi_{01100} = -\frac{1}{c_0}(\Id - \T)x\sin(2sx), \\
&\T \Psi_{30000} = -\frac{2}{c_0}(\Id-\T)\cos(sx)\Psi_{20000}, \\
&\T \Psi_{10200} = -\frac{2}{c_0}(\Id - \T) (\cos(sx)\Psi_{00200} + \sin(sx)\Psi_{10100}),
\end{split}\]
where all $\Psi_{pqlmn}$ are subject to $\Q_2\Psi_{pqlmn}= 0$ and $\Q_2$ is given in~\eqref{proj-r2}. Using~\eqref{even-multiplier} with $\mathbf{m} = 1-c_0\ell, y = 0, s, 2s$ or $3s$ again, then evaluating $\Psi_{pqlmn}''''(0) = \psi_{pqlmn}$ gives
\[\begin{split}
& \psi_{10001} = -8s^2e, \\
& \psi_{20000} = s^4(a+9b), \\
& \psi_{00200} = s^4(a-9b), \\
& \psi_{10010} = \psi_{01100} = 9s^4c-48s^3b, \\
&\psi_{30000} = \Big(\big(-2a(a+b)-18b(a+b)+128bd - \frac{9s}{2}(a-3b)c\big)s^2 \\
&\hspace{2.5cm}+24 s^2b(a-3b)+8(2a+b)e\Big)s^2, \\
& \psi_{10200} = \Big(\big(-2a(a-b)-18b(a-b) - 128 bd - \frac{9s}{2}(a+3b)c\big)s^2\\
&\hspace{2.5cm}+24 s^2 b(a+3b) + 8(2a-b)e\Big)s^2\\
&\hspace{2.5cm}+\Big(\big(-54sbc + 4ab - 36b^2 - 256 bs\big)s^2 + 288 s^2 b^2 + 16 be \Big)s^2,
\end{split}\]
with 
\[\begin{aligned}&a=\frac{1}{2c_0}\left(1-\frac{1}{1-c_0}\right),& &b = \frac{1}{2c_0}\left(1-\frac{1}{1-c_0\ell(2s)}\right),&\\
&c=\frac{\ell'(2s)}{(1-c_0\ell(2s))^2},& &d=\frac{1}{2c_0}\left(1-\frac{1}{1-c_0\ell(3s)}\right),& \\
\end{aligned}
\]
and finally
\[e = \frac{1}{c_0^2\ell''(s)}.\]

\section{Coefficients in normal form reduction}
\subsection{For generalized solitary waves}
\label{appendix-normal-form-r1}
Our goal here is to compute coefficients $p_0,p_1,p_2,p_3,q_0$ and $q_1$ in Section~\ref{normal-form-r1}. The expansion of $\Phi$ in $(\Aa, \Bb, \cC, \Cc)$ and $\mu$ up to second-order terms is
\[\begin{split}&\Phi(\Aa, \Bb, \cC, \Cc, \mu) \\
&= \phi_{00001} \mu + \phi_{10001}\mu \Aa + \phi_{01001}\mu \Bb + \phi_{00101} \mu \cC + \phi_{00011} \mu \Cc  + \phi_{20000} \Aa^2 + \phi_{11000} \Aa \Bb \\
&\hspace{0.5cm}+ \phi_{10100} \Aa \cC + \phi_{10010} \Aa \Cc + \phi_{02000} \Bb^2 + \phi_{01100} \Bb\cC + \phi_{01010} \Bb \Cc + \phi_{00200} \cC^2 \\
&\hspace{0.5cm}+ \phi_{00110} |\cC|^2 + \phi_{00020} \Cc^2 + \O(|\mu^2|+(|\mu|+|(\Aa, \Bb, \cC, \Cc)|)^3)).
\end{split}\]
Denote the coefficients in front of $A^pB^qC^lD^m\mu^n$ in~\eqref{reduced-syst} by $\psi_{pqlmn}$. We also Taylor expand the nonlinear term 
\[\Rr(U, \mu) = \mu \Rr_{11}(U) + \Rr_{20}(U, U) + \O(|\mu|^2 + |(U, \mu)|^3),\]
where 
\[\begin{split} \Rr_{11}(x,y,z,w) = \frac{1}{k_0^3}\begin{pmatrix}
0 \\ k_0\psi_{10001}x + k_0 \psi_{00101}z \\ 0 \\-\psi_{10001}x - \psi_{00101}z
\end{pmatrix} \hspace{0.3cm}\text{and}\hspace{0.3cm}\Rr_{20}(U, \tilde{U}) = \frac{1}{k_0^3} \begin{pmatrix}
0 \\ k_0 H(U, \tilde{U}) \\ 0 \\ -H(U, \tilde{U})
\end{pmatrix}
\end{split}\]
with 
\[\begin{split}
&H(U, \tilde{U}) = H((x,y,z,w), (\tilde{x}, \tilde{y}, \tilde{z}, \tilde{w})) \\
& = \psi_{20000}x \tilde{x} + \frac{\psi_{10100}}{2}(x\tilde{z}+z\tilde{x}) + \psi_{02000} y \tilde{y} +\frac{\psi_{01010}}{2}(y \tilde{w}+ w\tilde{y}) + \psi_{00200}z\tilde{z} + \psi_{00020}w\tilde{w}.
\end{split}\]
Plugging $U = \Aa \xi_0 + \Bb \xi_1 + \cC \zeta + \Cc \overline{\zeta} + \Phi$ into~\eqref{diff-eq-r1}, relevant linear equations are identified
\[\begin{split}
\O(\mu)\colon \hspace{0.5cm} &p_0\xi_1 = \Ll\phi_0  \\
\O(\mu \Aa) \colon \hspace{0.5cm} &p_1\xi_1 + p_0\phi_{11000} = \Ll\phi_{10001} + \Rr_{11}(\xi_0)\\
\O(\Aa^2) \colon \hspace{0.5cm} & p_2\xi_1 = \Ll\phi_{20000} + \Rr_{20}(\xi_0, \xi_0)\\
\O(|\cC|^2) \colon \hspace{0.5cm} & p_3\xi_1 = \Ll\phi_{00110}+2\Rr_{20}(\zeta,\overline{\zeta})\\
\O(\mu \cC)\colon \hspace{0.5cm} & \i q_0 \zeta + p_0\phi_{01100} + \i k_0 \phi_{00101} = \Ll\phi_{00101}+\Rr_{11}(\zeta)\\
\O(\Aa\cC)\colon \hspace{0.5cm} & \i q_1 \zeta + \i k_0 \phi_{10100} = \Ll\phi_{10100}+2\Rr_{20}(\xi_0, \zeta).
\end{split}\]

Since $\xi_1$ is not in the range of $\Ll$, $p_0 = 0$. Similarly, the equation from $\O(\mu \Aa)$ terms 
\[\Ll\phi_{10001} = p_1\xi_1-\Rr_{11}(\xi_0) = \frac{1}{k_0^3}\begin{pmatrix}0 \\ k_0^3 p_1 - k_0 \psi_{10001} \\ 0 \\ \psi_{10001} \end{pmatrix}\]
is solvable if and only if $k_0^3 p_1 - k_0 \psi_{10001} = 0$. Equations for $p_2$  and $p_3$  are handled in the same fashion. To solve for $q_0$, we note that
\[(\Ll-\i k_0)\phi_{00101} = \i q_0 \zeta - \Rr_{11}(\zeta)\]
Writing $\phi_{00101} = x_0 \xi_0 + x_1 \xi_1 + x_3 \zeta + \overline{x_3 \zeta}$, equation~\eqref{L-R-basis} can be used to show that $x_3\zeta = 0$ on the left-hand side. The right-hand side in the basis $\{\xi_0, \xi_1, \zeta, \overline{\zeta}\}$ is
\[\i q_0 \zeta - \Rr_{11}(\zeta) = -\frac{1}{k_0^2}\psi_{00101}\xi_1 + \left(\i q_0 - \frac{\i}{2k_0^3}\psi_{00101}\right)\zeta + \frac{\i}{2k_0^3}\overline{\zeta},\]
which gives $q_0 = (2k_0^3)^{-1} \psi_{00101}$. Using results from Appendix~\ref{center-mani-coeff-r1} and writing $\sigma = (1/3-\tau)^{-1}$, we get
\[\begin{split}
&p_0 = 0, \hspace{0.2cm} p_1 = 2\sigma = -p_2, \hspace{0.2cm} p_3 =  -4\sigma, \hspace{0.2cm} q_0 = -\frac{1}{\ell'(k_0)} \hspace{0.2cm} \text{and} \hspace{0.2cm}q_1 = \frac{2}{\ell'(k_0)}.
\end{split}\]

\subsection{For modulated solitary waves}
\label{appendix-normal-form-r2}
As before, the Taylor expansion of $\Phi(\Aa, \Bb, \overline{\Aa}, \overline{\Bb}, \mu)$ is
\[\Phi(\Aa, \Bb, \overline{\Aa}, \overline{\Bb}, \mu) = \sum_{\substack{2\leq p+q+l+m+n \leq 3 \\ n \ge 1}} \phi_{pqlmn} \Aa^p \Bb^q \overline{\Aa}^l \overline{\Bb}^m \mu^n + \cdots.\]
Our goal here is to  determine the Taylor expansion of $\Rr(U, \mu)$ from \eqref{diff-eq-r2} up to order three.  Using the symmetries as in the proof of Proposition~\ref{trunc-syst}, contributing terms are 
\[\begin{split}
&\mu A, \mu D, \mu^2A, \mu^2 D,\\
& A^2, B^2, C^2, D^2, AD, BC, \mu A^2, \mu B^2, \mu C^2, \mu D^2, \mu AD, \mu BC, \\
& A^3, D^3, A^2D, AB^2, AC^2, AD^2, ABC, B^2D, BCD, C^2D.
\end{split}\]
In short, if a multiplication between a pair or a triple of $\cos(sx), x\cos(sx), \sin(sx), x\sin(sx)$ is an even function in $x$, the product of their coefficients $A,B, C, D$ will contribute to $\Psi(A, B, C, D,\mu)''''(0)$. Denote the coefficients of $A^pB^qC^lD^m\mu^n$ by $\psi_{pqlmn}$. The Taylor expansion of $\Rr(U, \mu)$ is 
\[\begin{split}\Rr(U, \mu) &= \Rr_{00} + \Rr_{01}\mu + \Rr_{10}U + \Rr_{11}\mu U + \Rr_{20}(U, U) \\
&\qquad + \mu \Rr_{21}(U, U) + \mu^2 \Rr_{12}U + \Rr_{30}(U, U, U),
\end{split}\]
where relevant terms for us are
\[\begin{aligned}\color{black}&\Rr_{01}=\frac{1}{2s^3} \begin{pmatrix} 0 \\ -sH_{0}\\ H_0 \\ 0 \end{pmatrix},& &\Rr_{11}(U) = \frac{1}{2s^3}\begin{pmatrix}0 \\ -sH_{1}(U) \\H_1(U)\\ 0 \end{pmatrix}, \\
&\Rr_{20}(U, \tilde{U}) = \frac{1}{2s^3}\begin{pmatrix}0\\ -sH_2(U, \tilde{U}) \\ H_2(U, \tilde{U}) \\0 \end{pmatrix},&  &\Rr_{30}(U, \tilde{U}, \hat{U}) = \frac{1}{2s^3} \begin{pmatrix}
0 \\-s H_3(U, \tilde{U}, \hat{U}) \\ H_3(U, \tilde{U}, \hat{U}) \\ 0
\end{pmatrix},
\end{aligned}\]
with $U=(x,y,z,w), \tilde{U}= (\tilde{x}, \tilde{y}, \tilde{z}, \tilde{w}), \hat{U} = (\hat{x}, \hat{y}, \hat{z}, \hat{w})$, and
\begin{align*}
&H_0 = 0,\\
&H_1(x,y,z,w) \\
&\quad= \psi_{10001}x + \psi_{00011}w, \\
\hspace{0.5cm} &H_2((x,y,z,w), (\tilde{x}, \tilde{y}, \tilde{z}, \tilde{w})) \\
&\quad= \psi_{20000}x\tilde{x} + \psi_{02000} y \tilde{y} + \psi_{00200} z \tilde{z} + \psi_{00020}w\tilde{w} + \frac{\psi_{10010}}{2}(x\tilde{w}+\tilde{x}w) + \frac{\psi_{01100}}{2}(y \tilde{z} + \tilde{y}z),\\
&H_3((x,y,z,w), (\tilde{x},\tilde{y}, \tilde{z},\tilde{w}), (\hat{x}, \hat{y}, \hat{z}, \hat{w})) \\
&\quad= \psi_{30000} x \tilde{x} \hat{x} +\psi_{00030} w \tilde{w} \hat{w}\\
&\hspace{0.65cm} + \frac{\psi_{20010}}{3}(x \tilde{x}\hat{w}+x\hat{x}\tilde{w} + \tilde{x}\hat{x}w)+ \frac{\psi_{12000}}{3}(x\tilde{y}\hat{y}+\tilde{x}y\hat{y}+\hat{x}y\tilde{y})\\
&\hspace{0.65cm}+\frac{\psi_{10200}}{3}(x\tilde{z}\hat{z}+\tilde{x}z\hat{z}+\hat{x}z\tilde{z})+\frac{\psi_{10020}}{3}(x\tilde{w}\hat{w}+\tilde{x}w\hat{w}+\hat{x}w\tilde{w})  \\
&\hspace{0.65cm} + \frac{\psi_{11100}}{6}(x\tilde{y}\hat{z}+x\hat{y}\tilde{z}+\tilde{x}y\hat{z}+\tilde{x}\hat{y}z+\hat{x}y\tilde{z}+\hat{x}\tilde{y}z)\\
&\hspace{0.65cm} + \frac{\psi_{01110}}{6}(y\tilde{z}\hat{w}+y\hat{z}\tilde{w}+\tilde{y}z\hat{w}+\tilde{y}\hat{z}w+\hat{y}z\tilde{w}+\hat{y}\tilde{z}w)\\
&\hspace{0.65cm}  +\frac{\psi_{02010}}{3}(y\tilde{y}\hat{w}+y\hat{y}\tilde{w}+\tilde{y}\hat{y}w) +\frac{\psi_{00210}}{3}(z\tilde{z}\hat{w}+z\hat{z}\tilde{w}+\tilde{z}\hat{z}w).
\end{align*}
Let $\zeta^*_1 = \frac{1}{2}(0,1,0,-\i)^{\text{T}}$. It is a vector orthogonal to the range of $\Ll-\i s$ and satisfies
\[\langle\zeta_0, \zeta_1^*\rangle = 0, \hspace{0.3cm} \langle \overline{\zeta_0}, \zeta_1^*\rangle = 0, \hspace{0.3cm} \langle\zeta_1, \zeta_1^* \rangle = 0, \hspace{0.3cm}\langle\overline{\zeta_1}, \zeta_1^*\rangle = 0,\]
and $R\zeta_1^* = -\overline{\zeta^*_1}.$ Equations (D.45) and (D.47),~\cite[Appendix D.2]{HI}, give
\[\begin{split}&q_0 = \langle\Rr_{11} \zeta_0 + 2\Rr_{20}(\zeta_0, \phi_{00001}), \zeta_1^*\rangle,\\
&q_1 = \langle 2 \Rr_{20}(\zeta_0, \phi_{10100}) + 2 \Rr_{20}(\overline{\zeta_0}, \phi_{20000})+ 3\Rr_{30}(\zeta_0, \zeta_0, \overline{\zeta_0}), \zeta_1^* \rangle,
\end{split}\]
respectively. Here, $\phi_{00001}, \phi_{10100}, \phi_{20000}$ satisfy
\[\begin{split}
&\Ll\phi_{00001} + \Rr_{01} = 0, \hspace{0.2cm}\Ll\phi_{10100}+2\Rr_{20}(\zeta_0, \overline{\zeta_0}) = 0, \hspace{0.2cm}(\Ll-2\i s)\phi_{20000} + \Rr_{20}(\zeta_0, \zeta_0) = 0.
\end{split}\]
A computation gives
\[\phi_{00001} = 0, \hspace{0.3cm} \phi_{10100} = \frac{\psi_{20000}+\psi_{02000}}{s^3}\begin{pmatrix} 2/s \\0 \\ 0 \\ 1 \end{pmatrix}, \hspace{0.3cm} \phi_{20000} =  \frac{\psi_{20000}-\psi_{02000}}{9s^4}\begin{pmatrix}1\\ 3\i s \\ -\i \\ -3s/2\end{pmatrix},\]
which in turn yields
\[\begin{split}
&q_0 = -\frac{1}{4s^2}\psi_{10001}, \\
&q_1 = -\frac{\psi_{20000}+\psi_{00200}}{2s^5}\left(\frac{2}{s}\psi_{20000}+\frac{\psi_{10010}}{2}\right)\\
&\hspace{1.25cm} -\frac{1}{18s^6}(\psi_{20000}-\psi_{00200}) \left(\psi_{20000}-\psi_{00200}+\frac{3s}{2}\psi_{01100}-\frac{3s}{4}\psi_{01100}\right)\\
&\hspace{1.25cm}-\frac{3}{4s^2}\left(\psi_{30000}+\frac{\psi_{10200}}{3}\right). 
\end{split}\]

\bibliography{extreme_traveling}

\end{document}